\documentclass{article}
\usepackage{amsfonts}
\usepackage{amssymb}
\usepackage{amsmath}
\usepackage{amsthm}

\newtheorem{theorem}{Theorem}
\newtheorem{proposition}{Proposition}

\newtheorem{lemma}{Lemma}
\newtheorem{remark}{Remark}
\newtheorem{corollary}{Corollary}

\renewcommand{\Re}{\mathop{\mathrm{Re}}}
\renewcommand{\Im}{\mathop{\mathrm{Im}}}

\newcommand{\card}{\mathop{\mathrm{card}}}

\newcommand{\SU}{\mathop{\mathrm{SU}}}

\newcommand{\SO}{\mathop{\mathrm{SO}}}
\newcommand{\OO}{\mathop{\mathrm{O{}}}}
\newcommand{\so}{\mathop{\mathrm{so}}}

\newcommand{\cN}{\mathord{\mathcal{N}}}

\newcommand{\cE}{\mathord{\mathcal{E}}}
\newcommand{\cF}{\mathord{\mathcal{F}}}
\newcommand{\cH}{\mathord{\mathcal{H}}}
\newcommand{\cI}{\mathord{\mathcal{I}}}

\newcommand{\cP}{\mathord{\mathcal{P}}}

\newcommand{\cU}{\mathord{\mathcal{U}}}

\newcommand{\frh}{\mathord{\mathfrak{h}}}

\newcommand{\bfn}{{\mathord{\mathbf{n}}}}

\newcommand{\bbC}{{\mathord{\mathbb{C}}}}

\newcommand{\bbP}{{\mathord{\mathbb{P}}}}
\newcommand{\bbR}{{\mathord{\mathbb{R}}}}
\newcommand{\bbS}{{\mathord{\mathbb{S}}}}

\newcommand{\al}{{\mathord{\alpha}}}
\newcommand{\be}{{\mathord{\beta}}}

\newcommand{\Ga}{{\mathord{\Gamma}}}

\newcommand{\om}{{\mathord{\omega}}}
\newcommand{\si}{{\mathord{\sigma}}}
\newcommand{\ka}{{\mathord{\kappa}}}

\newcommand{\la}{{\mathord{\lambda}}}

\newcommand{\de}{{\mathord{\delta}}}
\newcommand{\De}{{\mathord{\Delta}}}

\newcommand{\ep}{{\mathord{\varepsilon}}}

\newcommand{\ze}{{\mathord{\zeta}}}

\newcommand{\spann}{\mathop{\mathrm{span}}\nolimits}
\newcommand{\codim}{\mathop{\mathrm{codim}}\nolimits}
\newcommand{\inr}{\mathop{\mathrm{inr}}\nolimits}
\newcommand{\scal}[2]{\left<#1,#2\right>}

\let\ov=\overline
\let\td=\tilde

\newcounter{ccc}
\setcounter{ccc}{23}

\date{}
\title{\bf Some remarks on spherical harmonics}
\author{V.M. Gichev\thanks{Partially supported by RFBR grants
06-08-01403, 06-07-89051 and SB RAS project No. 117}}

\begin{document}
\maketitle
\begin{abstract}
The article contains several observations on spherical harmonics
and their nodal sets: a construction for harmonics with prescribed
zeroes; a natural representation of this type for harmonics on
$\bbS^2$; upper and lower bounds for nodal length and inner radius
(the upper bounds are sharp); the precise upper bound for the
number of common zeroes of two spherical harmonics on $\bbS^2$;
the mean Hausdorff measure on the intersection of $k$ nodal sets
for harmonics of different degrees on $\bbS^m$, where $k\leq m$
(in particular, the mean number of common zeroes of $m$
harmonics).
\end{abstract}

\section*{Introduction}
This article contains several observations on spherical harmonics
and their nodal sets; the emphasis is on the case of $\bbS^2$.

Let $M$ be a compact connected homogeneous Riemannian manifold,
$G$ be a compact Lie group acting on $M$ transitively by
isometries,  and $\cE$ be a $G$-invariant subspace of the (real)
eigenspace for some non-zero eigenvalue of the Laplace--Beltrami
operator. We show that each function in $\cE$ can be realized as
the determinant of a matrix, whose entries are values of the
reproducing kernel for $\cE$. There is a similar well-known
construction for the orthogonal polynomials. However, the method
does not work for an arbitrary finite dimensional $G$-invariant
subspace of $C(M)$ (see Remark~\ref{extpo}). There is a natural
unique up to scaling factors realization of this type for
spherical harmonics on $\bbS^2$. It can be obtained by
complexification and restriction to the null-cone $x^2+y^2+z^2=0$
in $\bbC^3$. There is a two-sheeted equivariant covering of this
cone by $\bbC^2$, which identifies the space $\cH_n$ of harmonic
homogeneous complex-valued polynomials of degree $n$ on $\bbR^3$
with the space $\cP^2_{2n}$ of homogeneous holomorphic polynomials
on $\bbC^2$ of degree $2n$. \footnote{In 1876, Sylvester used an
equivalent construction to refine Maxwell's method for
representation of spherical harmonics. According to it, one has to
differentiate the function $1/r$, where $r$ is the distance to
origin, in suitable directions in $\bbR^3$ to get a real harmonic.
The directions are uniquely defined; the corresponding points in
$\bbS^2$ are called {poles} (see \cite[Ch. 9]{Ma} or
\cite[11.5.2]{BE}; \cite[Ch.~7, section~5]{CH} and \cite[Appendix
A]{Ar} contain extended expositions and further information).}

The set of all zeroes of a real spherical harmonic $u$ is called a
{\it nodal set}. We say that $u$ and its nodal set $N_u$ are {\it
regular} if zero is not a critical value of $u$. Then each
component of $N_u$ is a Jordan contour. According to \cite{Gi}, a
pair of the nodal sets $N_u,N_v$, where $u,v\in\cH_n^\bbR$ and
$n>0$, have a non-void intersection; moreover, if $u$ is regular,
then each component of $N_u$ contains at least two points of
$N_v$. The set $N_u\cap N_v$ may be infinite but the family of
such pairs $(u,v)$ is closed and nowhere dense in
$\cH_n^\bbR\times\cH_n^\bbR$. If $N_u\cap N_v$ is finite, then
$\card N_u\cap N_v\leq2n^2$.  The estimate follows from the Bezout
theorem and is precise. This gives an upper bound for the number
of critical points of a generic spherical harmonic, which probably
is not sharp. The configuration of  critical points is always
degenerate in some sense (see Remark~\ref{degen}). The problem of
finding lower bounds seems to be more difficult. According to
partial results and computer experiments, $2n$ may be the sharp
lower bound.

The investigation of metric and topological properties of the
nodal sets has a long and rich history; we only give a few remarks
on the subject of this paper. Let $\De$ be the Laplace--Beltrami
operator and $\la$ be an eigenvalue of $-\De$.

In 1978,  Br\"uning (\cite{Br}) found the lower bound
$c\sqrt{\la}\,$ for the length of a nodal set on a Riemann
surface. Yau conjectured (\cite[Problem 74]{Ya}) that the
Hausdorff measure of the nodal set of a $\la$-eigenfunction on a
compact Riemannian manifold admits upper and lower bounds of the
type $c\sqrt{\la}$. This conjecture was proved by Donnelly and
Fefferman for real analytic manifolds in \cite{DF}. In
(\cite{Sa}), Savo proved that
$\frac{1}{11}\mathop{\mathrm{Area}}(M)\sqrt{\la}$ is the lower
bound for the length of a nodal set in a surface $M$ for all
sufficiently large $\la$ in any surface and for all $\la$ if the
curvature is nonnegative. The upper and lower estimates of the
inner radius were found by Mangoubi (\cite{Man1}, \cite{Man2}); in
the case of surfaces, they are of order $\la^{-\frac12}$
(\cite{Man1}).

One can find the 1-dimensional Hausdorff measure of a set in
$\bbS^2$ integrating over $\SO(3)$ the counting function for the
number of its common points with translates of a suitable subset
of $\bbS^2$ (see Theorem~\ref{feder}). Using estimates of the
number of common zeroes, we give upper and lower bounds for the
length of a nodal set and for the inner radius of a nodal domain
in $\bbS^2$. The upper bounds are precise.

Let $\cH^{m+1}_n$ be the space of all real spherical harmonics of
degree $n$ on the unit sphere $\bbS^m$ in $\bbR^{m+1}$.
Corresponding to a point of $\bbS^m$ the evaluation functional at
it on $\cH^{m+1}_n$, we get an equivariant immersion of $\bbS^m$
to the unit sphere in $\cH^{m+1}_n$, which is locally a metric
homothety with the coefficient $\sqrt{\frac{\la_n}{m}}$, where
$\la_n=n(n+m-1)$ is the eigenvalue of $-\De$ in $\cH^{m+1}_n$.
This makes it possible to calculate the mean Hausdorff measure of
the intersection of $k$ harmonics of degrees $n_1,\dots,n_k$: it
is equal to $c\sqrt{\la_{n_1}\dots \la_{n_k}}$, where $c$ depends
only on $m$ and $k$ and $k\leq m$ (Theorem~\ref{meanh}). In
particular, if $k=m$, then we get the mean number of common zeroes
of $m$ harmonics: it is equal to
$2m^{-\frac{m}{2}}\sqrt{\la_{n_1}\dots \la_{n_m}}$; if $m=2$, then
$\sqrt{\la_{n_1}\la_{n_2}}$. In article \cite{DF}, Donelly and
Fefferman wrote: ``A main theme of this paper is that a solution
of $\De F=-\la F$, on a real analytic manifold, behaves like a
polynomial of degree $c\sqrt{\la}$\,''. Following this idea,
L.\,Polterovich conjectured that the mean number of common zeroes
is subject to the Bezout theorem, i.e., that it is as above. Thus,
the result in the case $k=m$ confirms this conjecture up to
multiplication by a constant, and may be treated as ``the Bezout
theorem in the mean'' for the spherical harmonics. For $k=1$, the
mean Hausdorff measure, by different but similar methods, was
found by Berard in \cite{Be} and Neuheisel in \cite{Ne}. The case
of a flat torus was investigated by Rudnick and Wigman
(\cite{RW}).

\section{Construction of eigenfunctions which vanish on prescribed finite sets}

In this section, $M$ is a compact connected oriented homogeneous
Riemannian manifold of a compact Lie group $G$ acting by
isometries on $M$, $\De$ is the Laplace--Beltrami operator on $M$,
\begin{eqnarray}
\la>0\label{lanze}
\end{eqnarray}
is an eigenvalue of $-\De$, $\cE_\la$ is the corresponding real
eigenspace (i.e., $\cE_\la$ consists of real valued
eigenfunctions), and $\cE$ is its $G$-invariant linear subspace.
Thus, $\cE$ is a finite sum of $G$-invariant irreducible subspaces
of $C^\infty(M)$. The invariant measure with the total mass 1 on
$M$ is denoted by $\si$, $L^2(M)=L^2(M,\si)$. For any $a\in M$,
there exists the unique $\phi_a\in\cE$ that realizes the
evaluation functional at $a$:
\begin{eqnarray*}
\scal{u}{\phi_a}=u(a)
\end{eqnarray*}
for all $u\in\cE$. Set
\begin{eqnarray*}
\phi(a,b)=\phi_a(b),\quad a,b\in M.
\end{eqnarray*}
It follows that
\begin{eqnarray}
&\phi(a,b)=\phi_a(b)=\scal{\phi_a}{\phi_b}=\scal{\phi_b}{\phi_a}=
\phi_b(a)=\phi(b,a),\label{reprm}\\
&u(x)=\scal{u}{\phi_x}=\int\phi(x,y)u(y)\,d\si(y)\quad\mbox{for
all}\ u\in \cE,\label{reprod}\\
&x\in N_u\quad\Longleftrightarrow \quad\phi_x\perp
u,\label{orthm}\\ &\phi_x\neq0\quad\mbox{for all}\quad x\in
M.\label{nonze}
\end{eqnarray}
The latter holds due to the homogeneity of $M$. According to
(\ref{reprod}), $\phi(x,y)$ is the {\it reproducing kernel for
$\cE$} (i.e., the mapping $u(x)\to\int\phi(x,y)u(y)\,d\si(y)$ is
the orthogonal projection onto $\cE$ in $L^2(M)$).

Let $a_1,\dots,a_k,x,y\in M$. Set $a=(a_1,\dots,a_k)\in M^k$ and
let $a$ also denote the corresponding $k$-subset of $M$:
$a=\{a_1,\dots,a_k\}$. Set
\begin{eqnarray}\label{defph}
\Phi_k^a(x,y)=\Phi_{k,y}^a(x)=\det\left(\begin{array}{cccc}
\phi(a_1,a_1)&\dots&\phi(a_1,a_k)&\phi(a_1,y)\\
\vdots&\ddots&\vdots&\vdots\\
\phi(a_k,a_1)&\dots&\phi(a_k,a_k)&\phi(a_k,y)\\
\phi(x,a_1)&\dots&\phi(x,a_k)&\phi(x,y)
\end{array}\right).
\end{eqnarray}
Obviously, $\Phi_k^a(x,y)=\Phi_k^a(y,x)$. Let us fix $y$ and set
$v=\Phi_{k,y}^a$. Then, by (\ref{defph}), $v\in\cE$ and
\begin{eqnarray}\label{akinn}
a_1,\dots a_k\in N_v.
\end{eqnarray}
We say that $a_1,\dots a_k$ are {\it independent} if the vectors
$\phi_{a_1},\dots,\phi_{a_k}\in\cE$ are linearly independent. For
a subset $X\subseteq M$, put
\begin{eqnarray}\label{defnx}
\cN_X=\spann\{\phi_x:\,x\in X\}.
\end{eqnarray}
If 
$X=N_u$, where $u\in\cE$,
then we abbreviate the notation: 
$\cN_{N_u}=\cN_u$. 
Set $$n=\dim\cE-1.$$ It follows from (\ref{lanze}) that $n\geq1$
(note that $\cE$ is real and $G$-invariant).
\begin{lemma}\label{linin}
Let $a\in M^k$, where $k\leq n$. Then $a_1,\dots a_k$ are
independent if and only if $\Phi_{k,y}^a\neq0$ for some $y\in M$.
\end{lemma}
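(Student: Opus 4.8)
The plan is to read off, using (\ref{reprm}), that every entry of the matrix in (\ref{defph}) is an inner product of two evaluation vectors: $\phi(a_i,a_j)=\scal{\phi_{a_i}}{\phi_{a_j}}$, $\phi(a_i,y)=\scal{\phi_{a_i}}{\phi_y}$, $\phi(x,a_j)=\scal{\phi_x}{\phi_{a_j}}$, and $\phi(x,y)=\scal{\phi_x}{\phi_y}$. Hence $\Phi_k^a(x,y)$ is a mixed Gram determinant of the tuples $(\phi_{a_1},\dots,\phi_{a_k},\phi_x)$ and $(\phi_{a_1},\dots,\phi_{a_k},\phi_y)$. The single observation that drives the whole argument is that putting $x=y$ produces an ordinary Gram determinant: $\Phi_k^a(y,y)=\det\bigl[\scal{\phi_{b_i}}{\phi_{b_j}}\bigr]_{i,j=1}^{k+1}$ with $(b_1,\dots,b_{k+1})=(a_1,\dots,a_k,y)$, which is $\geq0$ and vanishes precisely when $\phi_{a_1},\dots,\phi_{a_k},\phi_y$ are linearly dependent.

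For the implication ``$\Phi_{k,y}^a\neq0$ for some $y$ $\Rightarrow$ independence'' I would argue by contraposition. If the $\phi_{a_i}$ are dependent, fix scalars $c_i$, not all zero, with $\sum_i c_i\phi_{a_i}=0$. Row $i$ (for $i\leq k$) of the matrix in (\ref{defph}) is the list of values $\scal{\phi_{a_i}}{w}$ as $w$ runs over $\phi_{a_1},\dots,\phi_{a_k},\phi_y$, and this row does not involve $x$; linearity of the inner product then gives $\sum_i c_i(\text{row }i)=0$ for every choice of $x,y$. Thus $\Phi_k^a(x,y)\equiv0$, so in particular $\Phi_{k,y}^a=0$ for all $y$.

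For the converse, assume the $\phi_{a_i}$ are independent, so $V:=\spann\{\phi_{a_1},\dots,\phi_{a_k}\}$ has dimension $k$. Here the hypothesis $k\leq n=\dim\cE-1$ is used exactly once: it forces the strict inclusion $V\subsetneq\cE$. Next I would note that the evaluation vectors span the whole space, $\spann\{\phi_y:y\in M\}=\cE$, since any $u\in\cE$ orthogonal to every $\phi_y$ satisfies $u(y)=\scal{u}{\phi_y}=0$ for all $y$ by (\ref{reprod}) and hence is zero. As $V$ is proper, some $\phi_y\notin V$; then $\phi_{a_1},\dots,\phi_{a_k},\phi_y$ are independent and their Gram determinant is strictly positive, i.e.\ $\Phi_k^a(y,y)>0$ by the observation above. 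Consequently the function $x\mapsto\Phi_{k,y}^a(x)$ is not identically zero, being nonzero already at $x=y$.

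I do not anticipate a genuine obstacle here; the only thing one must see is the reduction that trivializes the statement, namely recognizing $\Phi_k^a(y,y)$ as a Gram determinant and thereby converting the claim ``$\Phi_{k,y}^a\neq0$ for some $y$'' into the purely linear-algebraic ``$\phi_y\notin V$ for some $y$''. After that, the spanning property of the vectors $\phi_y$, combined with the inclusion $V\subsetneq\cE$ guaranteed by $k\leq n$, closes the proof with no further computation.
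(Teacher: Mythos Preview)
Your proof is correct and follows essentially the same route as the paper: recognize $\Phi_{k,y}^a(y)$ as the Gram determinant of $\phi_{a_1},\dots,\phi_{a_k},\phi_y$, use $k\leq n$ together with $\cN_M=\cE$ to find $y$ making this set independent (hence the Gram determinant positive), and note that dependence of the $\phi_{a_i}$ trivially kills the determinant. The only difference is that you spell out the row-dependence argument for the easy direction, whereas the paper dismisses it as ``clearly''.
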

\begin{proof}
It follows from (\ref{orthm}) that $\cE=\cN_M$; since $k\leq n$,
$\cN_a\neq\cE$. Therefore, if $a_1,\dots a_k$ are independent,
then we get an independent set adding $y$ to $a$, for some $y\in
M$. Then $\Phi_{k,y}^a\neq0$ since $\Phi_{k,y}^a(y)>0$ (by
(\ref{reprm}) and (\ref{defph}), $\Phi_{k,y}^a(y)$ is the
determinant of the Gram matrix for the vectors
$\phi_{a_1},\dots,\phi_{a_k},\phi_y$). Clearly, $\Phi_{k,y}^a=0$
for all $y\in M$ if $a_1,\dots a_k$ are  dependent.
\end{proof}
The following proposition implies that each function in $\cE$ can
be realized in the form (\ref{defph}).
\begin{proposition}\label{bigno}
For any $u\in \cE$, $\cN_u=u^\bot\cap\cE$.
\end{proposition}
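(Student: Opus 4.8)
The plan is to prove the two inclusions separately; the first is immediate and the second reduces to a rigidity property of the nodal set. If $u=0$ the assertion is trivial, since then $N_u=M$, so $\cN_u=\cN_M=\cE$ by (\ref{orthm}), while $u^\bot\cap\cE=\cE$; so assume $u\neq0$. For the inclusion $\cN_u\subseteq u^\bot\cap\cE$, note that every generator $\phi_x$ lies in $\cE$ by construction, and for $x\in N_u$ the reproducing property together with (\ref{orthm}) gives $\scal{u}{\phi_x}=u(x)=0$; hence $\phi_x\in u^\bot\cap\cE$, and taking the span yields the inclusion.

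For the reverse inclusion I would pass to orthogonal complements inside $\cE$: since $\cN_u\subseteq u^\bot\cap\cE$, equality is equivalent to equality of the complements taken in $\cE$. By the reproducing property, a vector $v\in\cE$ satisfies $v\perp\phi_x$ for every $x\in N_u$ if and only if $v(x)=\scal{v}{\phi_x}=0$ for all such $x$, i.e. if and only if $N_u\subseteq N_v$; thus $\cN_u^{\bot}\cap\cE=\{v\in\cE:N_u\subseteq N_v\}$. On the other hand $u^\bot\cap\cE$ is a hyperplane in $\cE$, so its complement in $\cE$ is $\spann\{u\}$. Hence the Proposition is equivalent to the rigidity statement
\begin{eqnarray*}
v\in\cE,\quad N_u\subseteq N_v\quad\Longrightarrow\quad v\in\spann\{u\}.
\end{eqnarray*}

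This rigidity statement is the heart of the matter and is where analytic input enters. Since $M$ is a compact homogeneous Riemannian manifold it carries a $G$-invariant real-analytic structure for which the $\la$-eigenfunctions $u,v$ are real-analytic; by the standard description of nodal sets, for $u\neq0$ one has $N_u=R\cup\Sigma$, where $R$ is the smooth regular hypersurface $\{u=0,\ \nabla u\neq0\}$ (across which $u$ changes sign) and the singular part $\Sigma$ has Hausdorff codimension at least $2$, so that $M\setminus\Sigma$ is connected. I would then form the Wronskian field $w=u\,\nabla v-v\,\nabla u$. Because $u$ and $v$ share the eigenvalue $\la$, Green's identity gives $\mathrm{div}\,w=u\De v-v\De u=0$ on all of $M$, with no singularities; moreover $w=0$ on $N_u$, since there $u=v=0$, while on $M\setminus N_u$ one has $w=u^2\,\nabla(v/u)$.

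Finally, integrating over a nodal domain $D$ of $u$ and using $\mathrm{div}\,w=0$ together with integration by parts,
\begin{eqnarray*}
\int_D u^2\,|\nabla(v/u)|^2=\int_{\dd D}(v/u)\,(w\cdot\bfn)=0,
\end{eqnarray*}
because $w$ vanishes on the regular part of $\dd D\subseteq N_u$. Thus $\nabla(v/u)=0$ on each nodal domain, so $v/u$ is locally constant on $M\setminus N_u$; since $v/u$ extends smoothly across $R$ by the Hadamard division lemma, these local constants agree across $R$, and the connectedness of $M\setminus\Sigma$ forces $v/u\equiv c$ for a single constant $c$. Hence $v=cu$, which is the rigidity statement and completes the proof. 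The step I expect to be the main obstacle is justifying the vanishing of the boundary flux above: although $w=0$ on the regular nodal hypersurface, the ratio $v/u$ may blow up near $\Sigma$, and one must use the codimension bound on $\Sigma$ to control or discard its contribution. This is the one point demanding genuine care rather than formal manipulation.
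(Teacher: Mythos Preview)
Your reduction to the rigidity statement ``$v\in\cE,\ N_u\subseteq N_v\Rightarrow v\in\bbR u$'' is exactly the paper's Lemma~\ref{key}, and the easy inclusion $\cN_u\subseteq u^\bot\cap\cE$ is handled identically. The two proofs diverge only in how this rigidity is established.

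The paper argues variationally (sketching the proof of Lemma~1 of \cite{Gi}): if $U$ is a nodal domain of $u$, then $u$ keeps a sign on $U$, so $\la$ is the \emph{first} Dirichlet eigenvalue of $U$ and hence simple. Since $N_u\subseteq N_v$, any nodal domain $V$ of $v$ lies in some such $U$; the test function $w=v\cdot\one_V\in H^1_0(U)$ has Rayleigh quotient exactly $\la$, hence $w=cu|_U$, which forces $V=U$ and $v=cu$ on $U$, and then globally by unique continuation. No regularity analysis of $N_u$ is needed.

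Your Wronskian route is a genuine alternative, but the step you flag is a real gap, not a formality. The identity $\int_D u^2|\nabla(v/u)|^2=\int_{\partial D}(v/u)(w\cdot\bfn)$ is written for a function $v/u$ that is undefined on $\partial D$; on the regular part this is harmless (Hadamard gives a smooth extension and $w=0$ there), but near $\Sigma$ the product is of indeterminate type, and the codimension bound on $\Sigma$ alone does not control it. To close the argument you would need either a quantitative bound on $|v/u|$ near $\Sigma$ (e.g.\ by showing that $v$ vanishes on $\Sigma$ to at least the order $u$ does, which for eigenfunctions is true but requires its own local-structure lemma), or an excision argument with explicit flux estimates. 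Either route is comparable in work to the variational proof, which settles the matter in one stroke and is what the paper uses.
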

\begin{lemma}\label{key}
If $u,v\in\cE$ and $N_v\supseteq N_u$, then $v=cu$ for some
$c\in\bbR$.
\end{lemma}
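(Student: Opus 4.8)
The plan is to pass to the ratio $f=v/u$ and to show it is constant, exploiting that $u$ and $v$ are real-analytic (eigenfunctions of the elliptic, analytic-coefficient operator $\De$ on the analytic manifold $M$) and solve the \emph{same} equation $\De w=-\la w$. We may assume $u\neq0$; since $\la>0$ forces $\int_M u\,d\si=0$ (integrate $\De u=-\la u$ over the closed manifold $M$), the function $u$ changes sign and $N_u\neq\varnothing$. On the open dense set $\Omega=M\setminus N_u$ put $f=v/u$. The conclusion $v=cu$ is equivalent to $f\equiv\const$ on $\Omega$, because once $f=c$ on the dense set $\Omega$ the identity $v=cu$ extends to all of $M$ by continuity.

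The first key object is the Wronskian-type vector field $W=u\nabla v-v\nabla u$. Since $\De u=-\la u$ and $\De v=-\la v$ with the same $\la$, one gets $\operatorname{div}W=u\De v-v\De u=0$, so $W$ is divergence-free on all of $M$; moreover $W=0$ on $N_u$ (there $u=0$ and, as $N_u\subseteq N_v$, also $v=0$), and $W=u^2\nabla f$ on $\Omega$. Next comes a local division argument: near a point of $N_u$ where $\nabla u\neq0$ the nodal set is a smooth hypersurface on which $u$ vanishes to first order, and $v$ vanishes there too, so $v=uh$ with $h$ analytic and $f=h$ extends analytically across this regular part. Hence $f$ is real-analytic on $U=M\setminus\Sigma_u$, where $\Sigma_u=\{u=0,\ \nabla u=0\}$; by the standard structure theory of nodal sets $\Sigma_u$ has codimension at least $2$, so $U$ is connected and $fW$ is analytic on $U$.

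The third step is an energy identity. On $\Omega$ one has $|W|^2/u^2=\langle W,\nabla f\rangle=\operatorname{div}(fW)$ (using $\operatorname{div}W=0$), and the weighted energy $\int_M |W|^2/u^2=\int_M u^2|\nabla f|^2$ is finite. Writing $\Sigma_\ep$ for the $\ep$-tube around $\Sigma_u$ and $M_\ep=M\setminus\Sigma_\ep\subseteq U$, the divergence theorem gives $\int_{M_\ep}\operatorname{div}(fW)=\int_{\partial\Sigma_\ep}\langle fW,\nu\rangle$, the regular part of $N_u$ (interior to $M_\ep$) contributing nothing since $W=u^2\nabla f=0$ there. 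As $\ep\to0$ the left side increases to $\int_\Omega|W|^2/u^2\geq0$, while the boundary integral tends to $0$. Therefore $\int_\Omega|W|^2/u^2=0$, so $W\equiv0$, i.e. $\nabla f=0$ on $\Omega$; by analyticity $\nabla f\equiv0$ on the connected set $U$, whence $f\equiv c$ and $v=cu$.

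The main obstacle is precisely the vanishing of the boundary term near $\Sigma_u$, which rests on two inputs: that $\Sigma_u$ has codimension $\geq2$, and that $fW$ decays at $N_u$. The decay is a vanishing-order computation: if $u,v$ vanish to orders $p$ and $q\geq1$ at a point of $\Sigma_u$, then $fW=v\nabla v-(v^2/u)\nabla u$ has size $\lesssim r^{2q-1}\to0$ (the singularity of $1/u$ is cancelled by the factor $\nabla u\sim r^{p-1}$), and together with $\operatorname{Area}(\partial\Sigma_\ep)=O(\ep^{c-1})$, $c\geq2$, this kills the term. I would isolate exactly this decay-plus-codimension statement as a preliminary lemma. (Note that the nontrivial inclusion in Proposition~\ref{bigno} is merely a reformulation of the present lemma, so it must not be invoked here; the content is genuinely analytic.)
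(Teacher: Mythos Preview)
Your route is entirely different from the paper's. The paper does not touch the Wronskian field or any division argument; it invokes the variational characterization of the first Dirichlet eigenvalue. Since $N_v\supseteq N_u$, every nodal domain $V$ of $v$ sits inside some nodal domain $U$ of $u$; $u$ has fixed sign on $U$, so $\la$ is the \emph{first} Dirichlet eigenvalue of $U$ and is therefore simple; extending $v|_V$ by zero to $U$ realizes equality in the Rayleigh quotient, forcing $v=cu$ on $U$, hence on $M$. No analyticity, no structure of $\Sigma_u$, no boundary terms.

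Your argument has a real gap at precisely the place you flag. The decay $|fW|\lesssim r^{2q-1}$ is obtained from $|v^2|\lesssim r^{2q}$, $|\nabla u|\lesssim r^{p-1}$, and ``$|1/u|\lesssim r^{-p}$''. That last bound is a \emph{lower} bound on $|u|$, and it is false: along the regular branches of $N_u$ emanating from $x_0\in\Sigma_u$ one has $u=0$ while $r>0$, so $|u|/r^{p}$ is not bounded below on $\partial\Sigma_\ep$. You have shown that $f$ extends analytically across $N_u\setminus\Sigma_u$, so $fW$ is continuous on $U$, but nothing in your outline prevents $|f|$ (hence $|fW|$) from blowing up as $\partial\Sigma_\ep\to\Sigma_u$. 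Concretely, if at some $x_0\in\Sigma_u$ one had $\mathrm{ord}_{x_0}(v)=q<p=\mathrm{ord}_{x_0}(u)$, then along generic rays $f\sim r^{q-p}\to\infty$; the hypothesis $N_v\supseteq N_u$ does \emph{not} rule this out via the order count you give, only via the stronger statement that the leading homogeneous part of $v$ is divisible by that of $u$ (and, pushing further, that $v$ is locally divisible by $u$). Establishing that divisibility is the real content here: it needs the Bers/Cheng description of $N_u$ near $\Sigma_u$ plus a genuine real-analytic division (Nullstellensatz-type) argument, not a comparison of vanishing orders. Once you know $v=gu$ with $g$ analytic on all of $M$, the rest collapses to integrating $\operatorname{div}(gW)=u^2|\nabla g|^2$ over the closed manifold; but the work you have deferred to the ``preliminary lemma'' is exactly the hard part, and the one-line justification you give for it does not stand. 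The same issue undercuts the asserted finiteness of $\int_M|W|^2/u^2$.
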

\begin{proof}
This immediately follows from the inclusion $N_v\supseteq N_u$ and
Lemma~1 of \cite{Gi}, which states that $v=cu$ for some
$c\in{\mathbb R}$ if there exist nodal domains  $U,V$ for $u,v$,
respectively, such that $V\subseteq U$.
\end{proof}
Here is a sketch of the proof of the mentioned lemma; it is based
on the same idea as Courant's Nodal Domain Theorem. Since $u$ does
not change its sign in $U$, $\la$ is the first Dirichlet
eigenvalue for $U$. Hence, it has multiplicity 1 and
$D(w)\geq\la\|w\|_{L^2(U)}$ for all $w\in C^2(M)$ that vanish on
$\partial U$, where $D$ is the Dirichlet form on $U$. Moreover,
the equality holds if and only if $w=cu$ for some $c\in\bbR$. On
the other hand, if $w$ vanishes outside $V$ and coincides with $v$
in $V$, then the equality is fulfilled.

\begin{proof}[Proof of Proposition~\ref{bigno}]
If $v\in\cE$ and $v\perp\cN_u$, then $N_v\supseteq N_u$ by
(\ref{orthm}). Thus,  $v\in\bbR u$ by Lemma~\ref{key}. Therefore,
$\cN_u\supseteq u^\bot\cap\cE$. The reverse inclusion is evident.
\end{proof}

Let $\Phi:\,M^{n+1}\to\cE\,$ be the mapping $(a,y)\to\Phi_{n,y}^a$
and set $\,\cU=\Phi(M^{n+1})$.
\begin{theorem}\label{repm}
\begin{itemize}
\item[\rm({\romannumeral1})] Let $u\in\cE$, $u\neq0$. For
$(a,y)\in N_u^{n}\times M$,
\begin{eqnarray}\label{deter}
\Phi(a,y)=c(a,y)u,
\end{eqnarray}
where $c$ is a continuous nontrivial function on $N_u^{n}\times
M$. \item[\rm({\romannumeral2})] $\,\cU$ is a compact symmetric
neighbourhood of zero in $\cE$. \item[\rm({\romannumeral3})] For
every $a\in M^{n}$, there exists a nontrivial nodal set which
contains $a$; for a generic $a$, this set is unique.
\end{itemize}
\end{theorem}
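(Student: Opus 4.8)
The plan is to deduce (i) directly from Proposition~\ref{bigno}. Fix $u\in\cE$ with $u\neq0$ and take $(a,y)\in N_u^{n}\times M$; set $v=\Phi(a,y)=\Phi_{n,y}^a$. Since every $a_i\in N_u$, property~(\ref{orthm}) gives $\phi_{a_i}\perp u$, so by Proposition~\ref{bigno} the vectors $\phi_{a_1},\dots,\phi_{a_n}$ all lie in $u^\bot\cap\cE=\cN_u$, a space of dimension $n$ (it has codimension $1$ in $\cE$, and $\dim\cE=n+1$). On the other hand $v\in\cE$ and, by~(\ref{akinn}), $v(a_i)=\scal{v}{\phi_{a_i}}=0$, so $v\perp\phi_{a_i}$ for every $i$. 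When $a_1,\dots,a_n$ are independent these vectors form a basis of $u^\bot\cap\cE$, hence $v\perp(u^\bot\cap\cE)$ and therefore $v\in\bbR u$; when they are dependent $v=0$ by Lemma~\ref{linin}. Either way $v=c(a,y)\,u$, and continuity of $c$ follows from continuity of $\phi$ through $c(a,y)=\scal{\Phi(a,y)}{u}/\|u\|^2$.

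For the nontriviality of $c$ I would expand the determinant in~(\ref{defph}) along its last row: the cofactor of the entry $\phi(x,y)$ is the Gram determinant $G=\det\bigl(\phi(a_i,a_j)\bigr)_{i,j=1}^{n}$, so $v=\sum_{j}A_j\phi_{a_j}+G\,\phi_y$. Choosing $a_1,\dots,a_n\in N_u$ independent — they exist because $\cN_u=\spann\{\phi_x:x\in N_u\}$ has dimension $n$ — and any $y$ with $u(y)\neq0$, the component of $v$ along $u$ is $c(a,y)=G\,u(y)/\|u\|^2\neq0$ (here $G>0$ as the Gram determinant of independent vectors). Thus $c\not\equiv0$ on $N_u^{n}\times M$.

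For (ii), compactness is immediate since $\Phi$ is continuous and $M^{n+1}$ is compact, and $0\in\cU$ because a dependent tuple (say $a_1=a_2$) gives $\Phi_{n,y}^a=0$. I read ``symmetric'' as invariance under the $G$-action on $\cE$: as $G$ acts by isometries, $\phi(gx,gy)=\phi(x,y)$ and hence $\Phi_{n,gy}^{ga}=g\cdot\Phi_{n,y}^a$, so $g\,\cU=\cU$ for all $g\in G$ (and $\cU=-\cU$ whenever some $g$ acts on $\cE$ as $-\mathrm{id}$, e.g. for $\bbS^1$ or odd-degree harmonics). To see that $\cU$ is a neighbourhood of $0$, fix a unit $u\in\cE$; by~(i), taking independent $a\in(N_u)^{n}$ and letting $y$ range over $M$ makes $\Phi(a,y)=G(a)\,u(y)\,u$ fill the segment $\bigl[G(a)\min_M u,\ G(a)\max_M u\bigr]\,u$. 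Since $u$ is a $\la$-eigenfunction with $\la>0$ we have $\int_M u\,d\si=0$, so $\min_M u<0<\max_M u$ and this segment contains $0$ in its interior. Hence $\cU$ meets every line through $0$ in a segment with $0$ in its relative interior, and a compactness argument over the unit sphere of $\cE$ upgrades this to a uniform radius, giving a ball about $0$ inside $\cU$.

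Finally, (iii): for $a\in M^{n}$ the functions of $\cE$ vanishing on $\{a_1,\dots,a_n\}$ are exactly $\spann\{\phi_{a_1},\dots,\phi_{a_n}\}^\bot\cap\cE$ by~(\ref{orthm}); as $\dim\cE=n+1$ this subspace has dimension at least $1$, producing a nontrivial $u$ with $a\subseteq N_u$, and its dimension is exactly $1$ precisely when $\phi_{a_1},\dots,\phi_{a_n}$ are independent, that is, on the open dense (generic) set $\{G(a)\neq0\}$, where the nodal set is unique. I expect the only genuinely delicate step to be the uniform lower bound on the radius in (ii): although~(i) furnishes a segment of positive length in each direction, one must prevent these lengths from collapsing to $0$ as the direction varies, which I would control by compactness of the unit sphere together with the (semi)continuity of $u\mapsto\max_M u$ and of the maximal Gram determinant $G^*(u)$ taken over independent tuples in $N_u$.
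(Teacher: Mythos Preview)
Part~(i) is essentially the paper's argument; your explicit formula $c(a,y)=G(a)\,u(y)/\|u\|^{2}$ is a nice addition (the paper simply invokes Lemma~\ref{linin} for nontriviality). Part~(iii) is also the same in spirit, but you assert that $\{G(a)\neq0\}$ is dense without saying why; the paper closes this by noting that $\phi_x$, hence $G$, is real analytic on the connected manifold $M^{n}$, so $G\not\equiv0$ forces its zero set to be nowhere dense.

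The real gap is in~(ii). ``Symmetric'' means $\cU=-\cU$, not $G$-invariance, and your parenthetical justification (``whenever some $g$ acts as $-\mathrm{id}$'') does not cover the general case. The paper's proof is one line: for $n>1$, transposing any two of the $a_i$ changes the sign of the determinant $\Phi_{n,y}^a$, hence $\cU=-\cU$; for $n=1$ the paper uses that a $G$-invariant starlike compact set in the two-dimensional $\cE$ is a disc. For the neighbourhood assertion the paper's route is also different from yours: rather than bounding $G^{*}(u)\cdot\max_{M}u$ from below over the unit sphere, it observes that $\cU$ is starlike (fixing $a$ and moving $y$ through the connected $M$ sweeps out the segment $[0,\Phi(a,y)]$) and that $\bigcup_{t>0}t\,\cU=\cE$ by~(i), and deduces the conclusion from compactness, symmetry, starlikeness and this last property. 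Your acknowledged ``delicate step'' --- lower semicontinuity of $G^{*}(u)=\sup_{a\in N_u^{n}}G(a)$ --- is genuine: $N_u$ does not vary continuously with $u$, so to carry your argument through one would need to choose near-maximising tuples at \emph{regular} points of $N_u$ and use the implicit function theorem to push them into $N_{u'}$ for nearby $u'$.
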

\begin{proof}
Let $a\in N_u^{n}$. If $a_1,\dots,a_n$ are independent, then
$\codim\cN_a=1$; since $u\perp\cN_u$ by (\ref{orthm}), we get
(\ref{deter}), where $c(a,y)\neq0$ for some $y\in M$ by
Lemma~\ref{linin}. If $a_1,\dots,a_n$ are dependent, then
$\Phi(a,y)=0$ for all $y\in M$ by the same lemma. The function $c$
is continuous by (\ref{defph}); it is nonzero since the set $N_u$
contains independent points $a_1,\dots,a_n$ by
Proposition~\ref{bigno}. This proves ({\romannumeral1}).

According to (\ref{defph}), $\Phi$ is continuous. Hence, $\cU$ is
compact. Since $M$ is connected, for any $u\in\cU$, we may get the
segment $[0,u]$ moving $y$; hence, $\cU$ is starlike. Since
transposition of every two points in $a$ changes the sign of
$c(a,y)$, $\cU$ is symmetric if $n>1$; for $n=1$, $\cU$ is a disc
in $\cE$ because it is $G$-invariant and starlike. Thus, $\cU$ is
compact, symmetric, starlike, and $\cup_{t>0}\,t\cU=\cE$. Hence
$\cU$ is a neighbourhood of zero, i.e., ({\romannumeral2}) is
true.

Let $a\in M^{n}$ and $a'\subseteq a$ be a maximal independent
subset of $a$. Then $\Phi_{k,y}^{a'}\neq0$ for some $y\in M$ by
Lemma~\ref{linin}, where $k=\card a'$. Set $v=\Phi_{k,y}^{a'}$.
According to (\ref{akinn}), $a'\subset N_v$. By (\ref{orthm}),
$N_v$ contains any point $x\in M$ such that $\phi_x\in\cN_{a'}$.
Hence $N_v$ includes $a$.  The set $N_v$ is unique if
$a_1,\dots,a_n$ are independent because $\codim\cN_v=1$ in this
case. Since $M$ is homogeneous and $\cE$ is finite dimensional,
the functions $\phi_x$, $x\in M$, are real analytic. Therefore,
either $\Phi^a_{n,y}=0$ for all $(a,y)\in M^{n+1}$ or
$\Phi^a_{n,y}\neq0$ for generic $(a,y)$ (note that $M$ is
connected). Finally, $\Phi^a_{n,y}\neq0$ for some $(a,y)\in
M^{n+1}$ since $\cN_M=\cE$ due to (\ref{orthm}) and (\ref{nonze}).
\end{proof}
A closed subset $X\subseteq M$ is called an {\it interpolation set
for a function space $\cF\subseteq C(M)$} if $\cF|_X=C(X)$.
\begin{corollary}\label{inter}
Let $k\leq\dim\cE$. For generic $a_1,\dots,a_k\in M$,
$a=\{a_1,\dots,a_k\}$ is an interpolation set for $\cE$. \qed
\end{corollary}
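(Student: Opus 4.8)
The plan is to reduce the interpolation property to the linear independence of the evaluation vectors $\phi_{a_1},\dots,\phi_{a_k}$, and then to use real-analyticity of the associated Gram determinant to pass from a single favorable configuration to a generic one. First I would note that, for $k$ distinct points, $C(a)$ is simply $\bbR^k$, and the restriction operator $R:\cE\to C(a)$, $Ru=(u(a_1),\dots,u(a_k))$, can be rewritten through (\ref{reprm}) as $Ru=(\scal{u}{\phi_{a_1}},\dots,\scal{u}{\phi_{a_k}})$. Hence $R$ is surjective---equivalently $a$ is an interpolation set for $\cE$---exactly when the functionals $u\mapsto\scal{u}{\phi_{a_i}}$ are linearly independent, i.e. when $\phi_{a_1},\dots,\phi_{a_k}$ are linearly independent in the Hilbert space $\cE$. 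In the terminology introduced before Lemma~\ref{linin}, this says precisely that $a_1,\dots,a_k$ are independent, so the corollary amounts to the claim that a generic $a\in M^k$ is independent.

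Next I would encode independence through the Gram determinant $G(a)=\det\bigl(\phi(a_i,a_j)\bigr)_{i,j=1}^k$, which is nonzero precisely when $\phi_{a_1},\dots,\phi_{a_k}$ are linearly independent. Since $M$ is homogeneous and $\cE$ is finite-dimensional, each $\phi_x$ depends real-analytically on $x$ (as already used in the proof of Theorem~\ref{repm}), so $G$ is a real-analytic function on the connected manifold $M^k$. It therefore suffices to show that $G$ is not identically zero: its zero set is then closed and nowhere dense (of measure zero), and on the complementary dense open set---that is, generically---$a$ is independent and hence an interpolation set.

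The one point requiring a genuine argument is that $G\not\equiv0$. Here I would invoke the spanning identity $\cN_M=\cE$, which follows from (\ref{orthm}) together with (\ref{nonze}) and was already recorded in the proof of Lemma~\ref{linin}. Because $\{\phi_x:x\in M\}$ spans a space of dimension $\dim\cE=n+1\geq k$, one can choose points $a_1,\dots,a_k\in M$ for which $\phi_{a_1},\dots,\phi_{a_k}$ are linearly independent, and at such a configuration $G(a)\neq0$. This existence step is the only place where the spanning property is used; everything else is the standard mechanism by which a nontrivial real-analytic function vanishes only on a nowhere-dense set, so I expect it to be the main (though short) obstacle.

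Finally I would remark that independence automatically forces the $a_i$ to be distinct, since $a_i=a_j$ would give $\phi_{a_i}=\phi_{a_j}$ and hence $G(a)=0$; thus the generic configuration delivered above is genuinely a $k$-point set, and no separate transversality argument is needed to ensure distinctness.
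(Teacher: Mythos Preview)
Your proof is correct and follows the same route the paper has in mind: the corollary is stated without proof precisely because the ingredients---linear independence of the $\phi_{a_i}$ being equivalent to interpolation, real-analyticity of the kernel (invoked in the proof of Theorem~\ref{repm}\,({\romannumeral3})), and the spanning identity $\cN_M=\cE$---are all already on the table. The only cosmetic difference is that the paper packages independence via the functions $\Phi^a_{k,y}$ of Lemma~\ref{linin} (whose value at $y$ is a Gram determinant), whereas you work directly with the $k\times k$ Gram determinant $G(a)=\det(\phi(a_i,a_j))$; this is slightly cleaner and handles the top case $k=\dim\cE$ without a separate remark.
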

\begin{remark}\rm
The function $c$ may vanish on some components of the set
$N_u^{n}\times M$. For example, let $M$ be the unit sphere
$\bbS^2\subset\bbR^3$ and $\cE$ be the restriction to it of the
space of harmonic homogeneous polynomials of degree $k$; then
$\dim\cE=2k+1$, $n=2k$. If $k>1$, then any big circle $\bbS^1$ in
$\bbS^2$ is contained in several nodal sets (for example, nodal
sets of the functions $x_1f(x_2,x_3)$, where $f$ is harmonic,
contain the big circle $\{x_1=0\}\cap\bbS^2$); moreover, if $k$ is
odd, then $\bbS^1$ may be a component of $N_u$. Hence,
$\codim\cN_{\bbS^1}>1$ and $\Phi(a,y)=0$ for all
$(a,y)\in\left(\bbS^1\right)^{n}\times \bbS^2$.
\end{remark}
\begin{remark}\label{extpo}\rm
Theorem~\ref{repm} fails for a generic finite dimensional
$G$-invariant subspace $\cE\subseteq C(M)$. Indeed, if $\dim\cE>1$
and $\cE$ contains  constant functions, then it includes an open
subset consisting of functions without zeroes, which evidently
cannot be realized in the form (\ref{defph}). Furthermore, it
follows from the theorem that the products
$\phi_{a_1}\wedge\dots\wedge\phi_{a_{n}}$ fill a neighbourhood of
zero in the $n$th exterior power of $\cE$, which may be identified
with $\cE$. This property evidently imply the interpolation
property of Corollary~\ref{inter} but the converse is not true; an
example is the space of all homogeneous polynomials of degree
$m>1$ on $\bbR^3$, restricted to $\bbS^2$ (or the space of all
polynomials of degree less than $n$ on $[0,1]\subset\bbR$, where
$n>2$).
\end{remark}

\section{Spherical harmonics on $\bbS^2$}

Let $\cP^m_n$ denote the space of al homogeneous holomorphic
polynomials of degree $n$ on $\bbC^m$ or/and the space of all
complex valued homogeneous polynomials of degree $n$ on $\bbR^m$;
clearly, there is one-to-one correspondence between these spaces.
Its subspace of polynomials which are harmonic on $\bbR^m$ is
denoted by $\cH_n^m$; we omit the index $m$ in $\cH_n^m$ if $m=3$.
Then $\dim\cH_n=2n+1$. The polynomials in $\cH_n^m$, as well as
their traces on the unit sphere $\bbS^{m-1}\subset\bbR^m$, are
called {\it spherical harmonics}. They are eigenfunctions of the
Laplace--Beltrami operator; if $m=3$, then the eigenvalue is
$-n(n+1)$. For a proof of these facts, see, for example,
\cite{SW}. We say that $u\in\cP^m_n$ is {\it real} if it takes
real values on $\bbR^m$.

The standard inner product in $\bbR^m$ and its bilinear extension
to $\bbC^m$ will be denoted by $\scal{\ \,}{\ }$,
$$r(v)=|v|=\sqrt{\scal{v}{v}},\quad v\in\bbR^m,$$
$r^2$ is a holomorphic quadratic form on $\bbC^m$. For
$a\in\bbC^m$, set
$$l_a(v)=\scal{a}{v}.$$

The functions $\Phi_k^a(x,y)$ admit holomorphic extensions on all
variables (except for $k$). If $M=\bbS^2\subset\bbR^3$, then the
extension to $\bbC^3$ and subsequent restriction to the null-cone
\begin{eqnarray*}
S_0=\{z\in\bbC^3:\,r^2(z)=0\}
\end{eqnarray*}
makes it possible to construct a kind of a natural representation
in the form (\ref{defph}), which is unique up to multiplication by
a complex number, for any complex valued spherical harmonic. The
projection of $S_0$ to $\bbC\bbP^2$ is Riemann sphere
$\bbC\bbP^1$. The cone $S_0$ admits a natural parametrization:
\begin{eqnarray}\label{param}
\ka(\ze_1,\ze_2)=(z_1,z_2,z_3)=(2\ze_1\ze_2,\ze_1^2-\ze_2^2,i(\ze_1^2+\ze_2^2)),
\quad \ze_1,\ze_2\in\bbC.
\end{eqnarray}
\begin{lemma}\label{restr}
The mapping $R:\,\cH_n\to\cP_{2n}^2$ defined by
\begin{eqnarray*}
Rp=p\circ\ka
\end{eqnarray*}
is one-to-one and intertwines the natural representations of
$\SO(3)$ and $\SU(2)$ in $\cH_n$ and $\cP_{2n}^2$, respectively.
\end{lemma}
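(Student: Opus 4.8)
The plan is to verify the three claims of the lemma in order: that $R$ is well-defined into $\cP_{2n}^2$, that it intertwines the group actions, and that it is a bijection.

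First I would check that $Rp = p\circ\ka$ really lands in $\cP_{2n}^2$. Since $\ka$ is given by quadratic forms in $\ze_1,\ze_2$, the composition $p\circ\ka$ is a homogeneous polynomial of degree $2n$ whenever $p$ is homogeneous of degree $n$; holomorphicity is clear because $\ka$ is holomorphic and $p$ extends holomorphically to $\bbC^3$. One sanity check worth recording is that $\ka$ does parametrize $S_0$: a direct computation gives $r^2(\ka(\ze_1,\ze_2)) = 4\ze_1^2\ze_2^2 + (\ze_1^2-\ze_2^2)^2 - (\ze_1^2+\ze_2^2)^2 = 0$, so the image lies in the null-cone as asserted. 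Note that $R$ is visibly linear in $p$.

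Next I would establish injectivity. If $p\circ\ka\equiv 0$, then $p$ vanishes on the image of $\ka$, which is all of $S_0$ (the map $\ka$ is surjective onto the cone, being essentially the standard double cover $\bbC^2\to S_0$). A harmonic polynomial on $\bbR^3$ that vanishes on the null-cone $r^2 = 0$ must be divisible by $r^2$; but a nonzero multiple of $r^2$ is never harmonic of the form we need (more precisely, $\cH_n\cap r^2\cP_{n-2}^3 = 0$ since harmonics are the $r^2$-free part in the decomposition $\cP_n^3 = \cH_n\oplus r^2\cH_{n-2}\oplus\dots$), so $p = 0$. This shows $\ker R = 0$, hence $R$ is injective, and since $\dim\cH_n = 2n+1$ while $\dim\cP_{2n}^2 = 2n+1$ as well, injectivity forces $R$ to be onto. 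Thus $R$ is a linear isomorphism.

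The main obstacle, and the heart of the matter, is the intertwining claim. Here I would use the standard two-to-one homomorphism $\SU(2)\to\SO(3)$ and show that $\ka$ is equivariant with respect to it: for $g\in\SU(2)$ with image $\bar g\in\SO(3)$, one must check $\ka(g\cdot\ze) = \bar g\cdot\ka(\ze)$. The components of $\ka$ are, up to the constants and the factor $i$, precisely a basis for the quadratic forms transforming in the adjoint-type representation, which is the source of the covering $\SU(2)\to\SO(3)$; making this explicit is a representation-theoretic computation identifying $\bbC^3$ (with the action coming from the symmetric square of the standard $\SU(2)$-module) with the complexified adjoint representation $\so(3)\otimes\bbC$. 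Granting the equivariance $\ka\circ g = \bar g\circ\ka$, the intertwining of $R$ is immediate: $R(\bar g\cdot p) = (\bar g\cdot p)\circ\ka = (p\circ\bar g^{-1})\circ\ka = p\circ(\bar g^{-1}\circ\ka) = p\circ(\ka\circ g^{-1}) = (p\circ\ka)\circ g^{-1} = g\cdot(Rp)$, so $R$ intertwines the two representations. The delicate point is fixing all the normalizing constants in $\ka$ so that the equivariance holds on the nose rather than merely up to scalars; I expect this bookkeeping with the explicit double cover to be the only genuinely fiddly part of the argument.
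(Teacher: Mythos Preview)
Your argument is correct, and the intertwining step matches the paper's: both rely on the equivariance of $\ka$ with respect to the $\SU(2)$--$\SO(3)$ actions to deduce that $R$ intertwines. The difference lies in how bijectivity is obtained. You prove injectivity directly---$p\circ\ka\equiv 0$ forces $p$ to vanish on $S_0$, hence $r^2\mid p$, hence $p=0$ by $\cH_n\cap r^2\cP_{n-2}^3=0$---and then count dimensions. The paper instead argues that since $R$ annihilates $r^2$ but not all of $\cP_n^3$, the decomposition $\cP_n^3=\cH_n\oplus r^2\cP_{n-2}^3$ forces $R|_{\cH_n}\neq 0$; then irreducibility of both $\cH_n$ and $\cP_{2n}^2$ (Schur's lemma) makes $R$ an isomorphism. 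Your route is more elementary in that it avoids invoking irreducibility of $\cP_{2n}^2$, at the cost of needing the surjectivity of $\ka$ onto $S_0$ and the primality of $(r^2)$; the paper's route is slicker once the representation-theoretic facts are on the table, and it never needs to know that $\ka$ hits all of $S_0$.
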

\begin{proof}
Clearly,  $p\circ\ka$ is a homogeneous polynomial on $\bbC^2$ of
degree $2n$ for any $p\in\cP_n^3$. Further, $\ka$ is equivariant
with respect to the natural actions of $\SU(2)$ in $\bbC^2$ and
$\SO(3)$ in $\bbC^3$: an easy calculation with  (\ref{param})
shows that the change of variables $\ze_1\to a\ze_1+b\ze_2$,
$\ze_2\to-\ov b\ze_1+\ov a\ze_2$, where $|a|^2+|b|^2=1$, induces a
linear transformation in $\bbC^3$ which preserves $r^2$ and leaves
$\bbR^3$ invariant (in other words, the transformation of
$\cP^2_2$, induced by this change of variables, in the base
$2\ze_1\ze_2$, $\ze_1^2-\ze_2^2$, $i(\ze_1^2+\ze_2^2)$ corresponds
to a matrix in $\SO(3)$). Hence $R$ is an intertwining operator.
It is well known that
\begin{eqnarray*}
\cP_n^3=\cH_n\oplus r^2\cP_{n-2}^3
\end{eqnarray*}
(see, for example, \cite{SW}). Since $R\neq0$ and $Rr^2=0$, we get
$R\cH_n\neq0$. It remains to note that the natural representations
of these groups in $\cH_n$, $\cP^2_n$ are irreducible.
\end{proof}
\begin{corollary}\label{propo}
For any $p\in\cH_n\setminus\{0\}$, the set $p^{-1}(0)\cap S_0$ is
the union of $2n$ complex lines; some of them may coincide. If the
lines are distinct, $q\in\cH_n$, and $p^{-1}(0)\cap
S_0=q^{-1}(0)\cap S_0$, then $q=cp$ for some $c\in\bbC$.
\end{corollary}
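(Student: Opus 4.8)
The plan is to transport the entire question to $\cP_{2n}^2$ via the isomorphism $R$ of Lemma~\ref{restr} and there reduce it to the factorisation of a binary form. Since $\ka$ is surjective onto $S_0$ and $p\circ\ka=Rp$, we have the pullback identity $\ka^{-1}\bigl(p^{-1}(0)\cap S_0\bigr)=(Rp)^{-1}(0)$ in $\bbC^2$; thus the zero geometry of $p^{-1}(0)\cap S_0$ is faithfully encoded by the zero set of the homogeneous polynomial $Rp$ of degree $2n$ on $\bbC^2$.

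First I would record two elementary facts about $\ka$. The line $\bbC\cdot(\ze_1,\ze_2)$ through the origin is carried by $\ka$ onto the line $\bbC\cdot\ka(\ze_1,\ze_2)$, because $\ka(t\ze_1,t\ze_2)=t^2\ka(\ze_1,\ze_2)$ and $t\mapsto t^2$ is onto $\bbC$; moreover $\ka(\ze_1,\ze_2)=0$ only at the origin. Hence $\ka$ descends to a map $\bbC\bbP^1\to\bbP(S_0)=\bbC\bbP^1$, and an explicit inversion of (\ref{param}) — one recovers $\ze_1^2,\ze_2^2,\ze_1\ze_2$, hence the ratio $[\ze_1:\ze_2]$, from a point of $S_0$ — shows that this descended map is a bijection. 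Now $Rp\neq0$ by injectivity of $R$, so $Rp$ factors into $2n$ homogeneous linear forms over $\bbC$ and its zero set is a union of $2n$ lines counted with multiplicity. Applying $\ka$ and using the bijection just noted, these correspond to exactly $2n$ complex lines in $S_0$, which proves the first assertion.

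For the second assertion, assume the $2n$ lines are distinct; equivalently, $Rp$ has $2n$ pairwise non-proportional linear factors $\ell_1,\dots,\ell_{2n}$, so $Rp=c_0\,\ell_1\cdots\ell_{2n}$. If $q\in\cH_n$ satisfies $q^{-1}(0)\cap S_0=p^{-1}(0)\cap S_0$, then pulling back by the surjection $\ka$ gives $(Rq)^{-1}(0)=(Rp)^{-1}(0)$ in $\bbC^2$. Hence the degree-$2n$ form $Rq$ vanishes on each of the distinct lines $\{\ell_j=0\}$, so every $\ell_j$ divides $Rq$; as the $\ell_j$ are pairwise non-proportional, their product divides $Rq$, and a degree count forces $Rq=c_1\,\ell_1\cdots\ell_{2n}=(c_1/c_0)\,Rp$ for some $c_1\in\bbC$. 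Then $R\bigl(q-(c_1/c_0)p\bigr)=0$, and the injectivity of $R$ (Lemma~\ref{restr}) gives $q=(c_1/c_0)p$, which is the claim with $c=c_1/c_0$.

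The only point that is not purely formal is the passage from lines in $\bbC^2$ to lines in $S_0$: namely, that $\ka$ matches the projectivised zero set of $Rp$ with $p^{-1}(0)\cap S_0$ bijectively and with the right multiplicities. Everything else is the fundamental theorem of algebra for binary forms together with the linear isomorphism $R$. Since that single geometric point is settled directly by the explicit formula (\ref{param}), I do not expect a serious obstacle.
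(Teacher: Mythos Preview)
Your proof is correct and follows exactly the route the paper takes: the paper's own proof is the one-line observation that ``$\ka$ maps lines onto lines and induces an embedding of $\bbC\bbP^1$ into $\bbC\bbP^2$,'' after which the claim reduces to the factorisation of the binary form $Rp\in\cP^2_{2n}$. You have simply unpacked this observation in detail --- the homogeneity $\ka(t\ze)=t^2\ka(\ze)$, the explicit inversion showing injectivity on $\bbC\bbP^1$, and the divisibility argument for the uniqueness statement --- so your argument is a fleshed-out version of the paper's, not a different one.
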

\begin{proof}
Clearly, $\ka$ maps lines onto lines and induces an embedding of
$\bbC\bbP^1$ into $\bbC\bbP^2$.
\end{proof}

The functions $\phi_a$ of the previous section can be written
explicitly:
\begin{eqnarray*}
\phi_a(x)=c_nP_n(\scal{a}{x}),\quad\mbox{where}\quad a,x\in
\bbS^2,
\end{eqnarray*}
$c_n$ is a normalizing constant, and $P_n$ is the $n$th Legendre
polynomial:
$P_n(t)=\frac1{2^{n}n!}\frac{d^{n}}{dt^{n}}(t^2-1)^{2n}$. There is
the unique extension of
$$\phi(a,x)=\phi_a(x)$$
into $\bbR^3$ which is homogeneous of degree $n$ and harmonic on
both variables (it is also symmetric and extends into $\bbC^3$
holomorphically). For example, if $n=3$, then $2P_3(t)=5t^3-3t$
and $\phi(a,x)$ is proportional to
$$5\scal{a}{x}^3-3\scal{a}{a}\scal{a}{x}\scal{x}{x}$$
(if $a=(1,0,0)$, then to $2x_1^3-3x_1x_2^2-3x_1x_3^2$). Of course,
the representation of $p\in\cH_n$ in the form (\ref{defph}) holds
for $M=\bbS^2$ but there is a more natural version in this case.
For $\ze=(\ze_1,\ze_2)\in\bbC^2$, set
\begin{eqnarray*}
j\ze=(-\ze_2,\ze_1).
\end{eqnarray*}
\begin{theorem}
Let $p\in\cH_n$. Suppose that $p^{-1}(0)\cap S_0$ is the union of
distinct lines $\bbC a_1,\dots,\bbC a_{2n}$. Then there exists a
constant $c\neq0$ such that
\begin{eqnarray}\label{reps3}
p(x)p(y)=c\det\left(\begin{array}{cccc}
\scal{a_1}{a_1}^{n}&\dots&\scal{a_1}{a_{2n}}^{n}&\scal{a_1}{y}^{n}\\
\vdots&\ddots&\vdots&\vdots\\
\scal{a_{2n}}{a_1}^{n}&\dots&\scal{a_{2n}}{a_{2n}}^{n}&\scal{a_{2n}}{y}^{n}\\
\scal{x}{a_1}^{n}&\dots&\scal{x}{a_{2n}}^{n}&\scal{x}{y}^{n}
\end{array}\right)
\end{eqnarray}
for all $y\in S_0$, $x\in\bbC^3$. Moreover, replacing
$\scal{x}{y}^{n}$ with $\phi(x,y)$ in the matrix, we get such a
representation of $p(x)p(y)$ for all $x,y\in\bbC^3$ (with another
$c$ in general).
\end{theorem}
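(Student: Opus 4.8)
The plan is to read the determinant in~(\ref{reps3}) as the holomorphic continuation to the null-cone $S_0$ of the construction of Section~1, and to deduce the theorem from there; the bridge is that on $S_0$ the monomial kernel $\scal{x}{y}^{n}$ coincides, up to one universal constant, with the reproducing kernel $\phi$. First I would establish this. Since $\phi$ is the homogeneous harmonic extension of $c_nP_n(\scal{x}{y})$ and $P_n(t)=k_nt^{n}+\dots$ has all its lower terms of the form $t^{n-2j}$ with $j\ge1$, homogenization sends $t^{n-2j}$ to $\scal{x}{y}^{n-2j}\big(\scal{x}{x}\scal{y}{y}\big)^{j}$, so every correction carries a factor $\scal{x}{x}\scal{y}{y}$. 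Hence
\[
\phi(x,y)=\mu\,\scal{x}{y}^{n}\qquad\text{whenever }x\in S_0\text{ or }y\in S_0,\qquad \mu=c_nk_n\neq0 .
\]
I would record this as a lemma, noting also that for $a\in S_0$ the polynomial $x\mapsto\scal{a}{x}^{n}$ is harmonic (its Laplacian is $n(n-1)\scal{a}{a}\scal{a}{x}^{n-2}=0$), so $\scal{a}{\cdot}^{n}\in\cH_n$ and $\phi_a=\mu\,\scal{a}{\cdot}^{n}$.

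Next I would prove the $\phi$-version, i.e.\ the assertion obtained by putting $\phi(\cdot,\cdot)$ in place of every $\scal{\cdot}{\cdot}^{n}$; this matrix is exactly~(\ref{defph}) with $k$ equal to $2n=\dim\cH_n-1$, continued holomorphically to $x,y\in\bbC^3$, and I claim its determinant equals $c\,p(x)p(y)$. Fix $y$ and expand along the last row: the determinant becomes a $\bbC$-linear combination, with coefficients independent of $x$, of $\phi(x,a_1),\dots,\phi(x,a_{2n}),\phi(x,y)$, hence lies in $\cH_n$ as a function of $x$. Setting $x=a_k$ makes the last row equal to the $k$-th row (by symmetry of $\phi$), so the determinant vanishes there and, by homogeneity, on the whole line $\bbC a_k$. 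Thus $x\mapsto\det$ is a harmonic of degree $n$ vanishing on the $2n$ distinct lines $\bbC a_1,\dots,\bbC a_{2n}=p^{-1}(0)\cap S_0$; since a nonzero element of $\cH_n$ has exactly $2n$ such lines by Corollary~\ref{propo}, it must be a scalar multiple $c(y)\,p$ of $p$ (with $c(y)=0$ allowed). Exchanging $x$ and $y$ transposes the matrix and so preserves the determinant; therefore $c(y)p(x)=c(x)p(y)$, which forces $c(y)=c\,p(y)$ for a constant $c$, i.e.\ $\det=c\,p(x)p(y)$.

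The step I expect to be the main obstacle is showing $c\neq0$, equivalently that the determinant is not identically zero; by Lemma~\ref{linin} this is precisely the linear independence of $\phi_{a_1},\dots,\phi_{a_{2n}}$ in $\cH_n$. Here I would pass through the isomorphism $R$ of Lemma~\ref{restr}: a direct computation from~(\ref{param}) gives $\scal{\ka(\al)}{\ka(\be)}=-2[\al,\be]^{2}$ with $[\al,\be]=\al_1\be_2-\al_2\be_1$, whence, writing $\bbC a_i=\ka(\bbC\al_i)$ and using $\phi_{a_i}=\mu\,\scal{a_i}{\cdot}^{n}$, one finds $R\phi_{a_i}=\const\,[\al_i,\cdot]^{2n}$. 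Since the lines are distinct the $\al_i$ are pairwise non-proportional, and the $2n$-th powers of $2n$ pairwise non-proportional linear forms on $\bbC^2$ are linearly independent in $\cP_{2n}^{2}$ (a Vandermonde argument); as $R$ is an isomorphism, the $\phi_{a_i}$ are independent and the determinant is not identically zero. For the complex arguments occurring here the ``Gram determinant'' in Lemma~\ref{linin} should be read as the square of $\det[Y_l(b_j)]_{l,j}$ for a real orthonormal basis $\{Y_l\}$ of $\cH_n$, which is nonzero exactly when the $\phi_{b_j}$ are independent; this is the one place where some care beyond the real statement is needed. Finally the original form with $\scal{x}{y}^{n}$ follows at once: when $y\in S_0$ every entry of the matrix has an argument on $S_0$, so by the bridge identity the $\phi$-matrix equals $\mu$ times the $\scal{\cdot}{\cdot}^{n}$-matrix entrywise; the two determinants then differ by the factor $\mu^{2n+1}$, and the stated identity holds for all $x\in\bbC^3$, $y\in S_0$ with $c'=\mu^{-(2n+1)}c\neq0$.
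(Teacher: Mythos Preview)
Your argument is correct and follows essentially the same route as the paper: both use that $\scal{a}{x}^n\in\cH_n$ for $a\in S_0$, invoke Corollary~\ref{propo} to identify the determinant with a multiple of $p$, use symmetry to separate variables, and settle $c\neq0$ by passing through the parametrization $\ka$, the identity $\scal{\ka(\al)}{\ka(\be)}=-2[\al,\be]^2$, and a Vandermonde argument in $\bbC^2$.

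The organizational differences are worth noting. You prove the $\phi$-version first for all $x,y\in\bbC^3$ and then specialize to $y\in S_0$ via the bridge identity $\phi=\mu\scal{\cdot}{\cdot}^n$; the paper does the reverse, establishing the $\scal{\cdot}{\cdot}^n$-version on $S_0$ first and then lifting. Your order is arguably cleaner, since the $\phi$-version is the more general statement. For $c\neq0$, the paper computes the determinant explicitly via formula~(\ref{detcc}), obtaining a closed product expression; you instead argue linear independence of the $\phi_{a_i}$ through the isomorphism $R$ of Lemma~\ref{restr}, which is more conceptual but yields less information. Your remark that the complex ``Gram determinant'' should be read as $(\det[Y_l(b_j)])^2$ is the right way to extend Lemma~\ref{linin} to points of $S_0$, and correctly handles the one genuine subtlety in transferring the Section~1 machinery to the holomorphic setting.
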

\begin{proof}
A calculation shows that $\scal{a}{x}^{n}$ is harmonic on $x$ for
all $n$ if $a\in S_0$. Hence, the function
$\Phi^a_y(x)=\Phi^a(x,y)$ in the right-hand side belongs to
$\cH_n$ for each $y\in S_0$. Clearly, $\Phi^a_y(a_k)=0$ for all
$k=1,\dots,2n$. By Corollary~\ref{propo}, $\Phi^a_y$ is
proportional to $p$. Since $\Phi^a(x,y)=\Phi^a(y,x)$, we get
(\ref{reps3}) if the right-hand side is nontrivial. Thus, we have
to prove that $c\neq0$. Let $x\in S_0$. There exist
$\al_1,\dots,\al_{2n},\xi,\eta\in\bbC^2$ such that
$a_k=\ka(\al_k)$ for all $k$, $x=\ka(\xi)$, and $y=\ka(\eta)$. By
a straightforward calculation, for any $a,b\in\bbC^2$ we get
\begin{eqnarray}\label{toc2}
\scal{\ka(a)}{\ka(b)}=
-2\scal{a}{jb}^2.
\end{eqnarray}
Hence, the right-hand side of (\ref{reps3}) is equal to
\begin{eqnarray}\label{reps2}
-2^{(2n+1)n}c\det\left(\begin{array}{cccc}
\scal{\al_1}{{j\al_1}}^{2n}&\dots&\scal{\al_1}{{j\al_{2n}}}^{2n}&\scal{\al_1}{{j\eta}}^{2n}\\
\vdots&\ddots&\vdots&\vdots\\
\scal{\al_{2n}}{{j\al_1}}^{2n}&\dots&\scal{\al_{2n}}{{j\al_{2n}}}^{2n}&\scal{\al_{2n}}{{j\eta}}^{2n}\\
\scal{\xi}{{j\al_1}}^{2n}&\dots&\scal{\xi}{{j\al_{2n}}}^{2n}&\scal{\xi}{{j\eta}}^{2n}
\end{array}\right).
\end{eqnarray}
The determinant can be calculated explicitly. More generally, if
$C=(c_{rs})_{r,s=1}^{k+1}$, where $c_{rs}=\scal{a_r}{b_s}^{k}$,
$a_r,b_s\in\bbC^2$, then
\begin{eqnarray}\label{detcc}
\det C=\prod_{r=1}^{k}{{k}\choose r}\prod_{s<r}\scal{a_r}{ja_s}
\prod_{s<r}\scal{b_r}{jb_s}
\end{eqnarray}
Let $a_r=(a_{r,1},a_{r,2})$, $b_s=(b_{s,1},b_{s,2})$. If all the
entries are nonzero, then
\begin{eqnarray*}
c_{rs}=\sum_{t=0}^k {{k}\choose r}
(a_{r,1}b_{s,1})^t(a_{r,2}b_{s,2})^{k-t}=a_{r,2}^kb_{s,1}^k
\sum_{t=0}^k {{k}\choose
r}\left(\frac{a_{r,1}}{a_{r,2}}\right)^t\left(\frac{b_{s,2}}{b_{s,1}}\right)^{k-t}
\end{eqnarray*}
We may factor out rows and columns of $C$. Then we get a matrix
$\td C$, which admits the decomposition $\td C=AB$, where
\begin{eqnarray*}
&A=\big({{k}\choose r}\al^t_r\big)_{r,t=0}^k,\quad
B=\big(\be^{k-t}_s\big)_{t,s=0}^k,\quad
\al_r=\frac{a_{r,1}}{a_{r,2}},\quad \be_s=\frac{b_{s,2}}{b_{s,1}}.
\end{eqnarray*}
Thus, the computation of $\det C$ is reduced to the Vandermonde
determinant. The straightforward calculation proves (\ref{detcc});
obviously, the assumption that the entries are nonzero is not
essential. Due to (\ref{detcc}) and (\ref{toc2}), this implies
that the determinant in (\ref{reps2}) is not zero if the lines
$\bbC\xi,\bbC\eta,\bbC a_1,\dots,\bbC a_{2n}$ are distinct (if
$a\in S_0$, then the plane $\scal{z}{a}=0$ intersects $S_0$ in the
line $\bbC a$). Hence, $c\neq0$.

It follows from the definition of $P_n$ and $\phi$ that
\begin{eqnarray}\label{vyrph}
\phi(x,y)=s_n\scal{x}{y}^n+r^2(x)r^2(y)h(x,y),
\end{eqnarray}
there $s_n>0$ is constant and $h$ is a polynomial. Therefore, we
can  get a function $f\neq0$ on $\bbC^3$, which coincides with
$p(x)$ on $S_0$ up to a constant factor, replacing $\scal{x}{y}^n$
with $\phi(x,y)$ in (\ref{reps3}) and fixing generic $y\in\bbC^3$.
By Corollary~\ref{propo}, the same is true on $\bbC^3$ since
$f\in\cH_n$ according to (\ref{reps3}) (all functions in the last
row are harmonic on $x$). Since $\phi(x,y)=\phi(y,x)$, this proves
the second assertion.
\end{proof}
\begin{remark}\rm
The set $p^{-1}(0)\cap S_0$, where $p\in\cH_n$, is also
distinguished by the orthogonality condition
\begin{eqnarray*}
\int_{\bbS^2}p(x)\scal{x}{w}^n\,d\si(x)=0,
\end{eqnarray*}
where $\si$ is the invariant measure on $\bbS^2$ and $w\in S_0$.
This is a consequence of (\ref{vyrph}) since $\int
p(x)\phi(x,y)\,d\si(x)=p(y)$ for all $y\in \bbS^2$, hence for all
$y\in\bbR^3$ ($p(y)$ and $\phi_x(y)$ are homogeneous of degree
$n$), moreover, for all $y\in\bbC^3$ (both sides are holomorphic
on $y$). In particular, this is true for $y\in S_0$ but
$\phi(x,y)=s_n\scal{x}{y}^n$ in this case.

If $p^{-1}(0)\cap S_0$ is the union of distinct lines $\bbC a_k$,
$k=1,\dots,2n$, then the functions $\scal{x}{a_k}^n$,
$k=1,\dots,2n$, form a linear base for the space of functions in
$\cH_n$ which are orthogonal to $p$ with respect to the bilinear
form $\int fg\,d\si$. This is a consequence of (\ref{toc2}): it is
easy to check that the functions $\scal{\ze}{b_s}^{k}$ on
$\bbC^2$, where $s=1,\dots,k$, are linearly independent if the
lines $\bbC b_s$ are distinct (the Vandermonde determinant).\qed
\end{remark}
We conclude this section with remarks on number of zeroes in
$\bbS^2$ of functions in $\cH_n$. Let $f\in\cH_n$, $u=\Re f$,
$v=\Im f$. A zero of $f$ is a common zero of $u$ and $v$. The
following proposition, in a slightly more general form, was proved
in \cite{Gi}. We say that $u$ is {\it regular} if zero is not a
critical value for $u$.
\begin{proposition}[\cite{Gi}]\label{fromp}\label{twopo}
Let $n>0$, $u\in\cH_n$. If $u$ is regular, then for any
$v\in\cH_n$ each connected component of $N_u$ contains at least
two points of $N_v$.\qed
\end{proposition}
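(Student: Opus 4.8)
The plan is to fix a single connected component $\gamma$ of the nodal set $N_u$ and show that $v$ must change sign along $\gamma$, which forces at least two zeroes. Since $u$ is regular, $N_u$ is a smooth $1$-manifold and $\gamma$ is a smooth Jordan contour; by the Jordan--Schoenflies theorem on $\bbS^2$ it bounds a topological disc $D$ with $\partial D=\gamma$. The main tool will be Green's second identity on $D$. The point I want to stress is that $D$ need not be a nodal domain of $u$: I use only that $\Delta u=-\la u$ and $\Delta v=-\la v$ hold throughout $\bbS^2$ (with $\la=n(n+1)>0$) and that $u$ vanishes on $\partial D=\gamma$, so the global structure of $N_u$ never enters.

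Applying Green's identity on $D$ I would write
\[
\int_D\bigl(u\,\Delta v-v\,\Delta u\bigr)\,dA=\int_\gamma\bigl(u\,\dd_\nu v-v\,\dd_\nu u\bigr)\,ds,
\]
where $\dd_\nu$ is the outward normal derivative along $\gamma$ and $ds$ the arc length. The left-hand side vanishes because $\Delta u=-\la u$ and $\Delta v=-\la v$ make the integrand equal to $-\la uv+\la vu\equiv0$; on the right, $u|_\gamma=0$ annihilates the first term. This leaves the single relation
\[
\int_\gamma v\,\dd_\nu u\,ds=0.
\]

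Next I would argue that $\dd_\nu u$ has one constant, nonzero sign along $\gamma$. Regularity gives $\nabla u\neq0$ on $\gamma$, and since $u$ is constant on the level curve $\gamma$, the gradient $\nabla u$ is normal to $\gamma$, so $\dd_\nu u=\scal{\nabla u}{\nu}$ is nonzero at every point. Because $\gamma$ is connected and $u$ crosses zero transversally there, the side of $\gamma$ on which $u>0$ is the same all along $\gamma$; hence $\dd_\nu u$ keeps a fixed sign. With $\dd_\nu u$ of fixed sign, the vanishing of $\int_\gamma v\,\dd_\nu u\,ds$ is impossible if the continuous function $v|_\gamma$ has one sign and is not identically zero. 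Thus either $v\equiv0$ on $\gamma$, or $v$ takes both a strictly positive and a strictly negative value on $\gamma$.

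Finally, a continuous function on the circle $\gamma$ taking values of both signs has at least two zeroes: picking $p,q\in\gamma$ with $v(p)>0>v(q)$, the two arcs into which $p,q$ split $\gamma$ each contain a zero by the intermediate value theorem, and these zeroes are distinct. If instead $v\equiv0$ on $\gamma$, it has infinitely many zeroes there. In either case $\gamma$ contains at least two points of $N_v$, which is the assertion. The only step that genuinely demands care is the constant-sign claim for $\dd_\nu u$; everything else is an immediate consequence of Green's identity and the intermediate value theorem, and I expect this to be the cleanest route since it never requires a description of the global topology of $N_u$.
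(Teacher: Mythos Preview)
Your argument is correct and is exactly the paper's own: it derives $\int_\gamma v\,\partial_\nu u\,ds=0$ from Green's formula (using $\Delta u=\Delta v=-\la$ times the function), observes that $\partial_\nu u$ keeps a fixed sign on the Jordan contour $\gamma$ by regularity, and concludes that $v$ must change sign on $\gamma$. The paper's sketch is terser and justifies Green's formula via degree-zero homogeneous extensions to a shell $(1-\ep,1+\ep)\times\bbS^2$, but the substance is identical.
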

The assertion follows from the Green formula which implies that
\begin{eqnarray}\label{grefo}
\int_C v\frac{\partial u}{\partial n}\,ds=0,
\end{eqnarray}
where $C$ is a component of $N_u$, which is a Jordan contour, $ds$
is the length measure on $C$, and $\frac{\partial u}{\partial n}$
is the normal derivative; note that $\frac{\partial u}{\partial
n}$ keeps its sign on $C$. For the standard sphere $\bbS^2$,
(\ref{grefo}) follows from the classical Green formula for the
domain $D_\ep=(1-\ep,1+\ep)\times\bbS^2$, where $\ep\in(0,1)$, and
the homogeneous of degree $0$ extensions of $u,v$ into $D_\ep$.

Let $u,v\in\cH_n$ be real and regular. Set
\begin{eqnarray*}
\nu(u,v)=\card N_u\cap N_v.
\end{eqnarray*}
For singular $u,v$, zeroes must be counted with multiplicities; if
$u,v\in\cH_n$, then the multiplicity of a zero can be defined as
the number of smooth nodal lines which meet at it; if $u,v$ have
multiplicities $k,l$ at their common zero, then one have to count
them $kl$ times (the greatest number of common zeroes which appear
under small perturbations). If $u=\phi_a$, where $a\in \bbS^2$,
then $N_u$ is the union of $n$ parallel circles $\scal{x}{a}=t_k$,
$x\in\bbS^2$, where $k=1,\dots,n$ and $t_1,\dots,t_n$ are the
zeroes of $P_n(t)$. Since they are distinct, $P'_n(t_k)\neq0$ for
all $k$. It follows from Proposition~\ref{fromp} that for any real
$v\in\cH_n$
\begin{eqnarray*}
\nu(\phi_a,v)\geq2n,
\end{eqnarray*}
where $a\in\bbS^2$. If $b\in \bbS^2$ is sufficiently close to $a$,
then the equality holds for $v=\phi_b$. In the inequality above,
$\phi_a$ and $n$ may be replaced with any regular $u$ and the
number of components of $N_u$, respectively. The latter can be
less than $n$ (according to \cite{Le}, it can be equal to one or
two if $n$ is odd or even, respectively\footnote{The corresponding
harmonic is a small perturbation of the function
$\Re(x_1+ix_2)^n$.
}). However, computer experiments support the following
{conjecture}:
for all real $u,v\in\cH_n$,
\begin{eqnarray*}
\nu(u,v)\geq2n.
\end{eqnarray*}
The common zeroes must be counted with multiplicities. Otherwise,
there is a simple example of two harmonics which have only two
common zeroes: $\Re(x_1+ix_2)^n$ and $\Im(x_1+ix_2)^n$.

On the other hand, for generic real $u,v\in\cH_n$ there is a
trivial sharp upper bound for $\nu(u,v)$. We prove a version that
is stronger a bit.
\begin{proposition}\label{verkh}
Let $u,v\in\cH_n$ be real. If $\nu(u,v)$ is finite, then
\begin{eqnarray}\label{upper}
\nu(u,v)\leq2n^2.
\end{eqnarray}
\end{proposition}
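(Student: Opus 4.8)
The plan is to derive (\ref{upper}) from the Bezout theorem after complexifying and passing to the projective plane. First I would regard $u$ and $v$ as real homogeneous polynomials of degree $n$ on $\bbR^3$, extend them to $\bbC^3$, and let $C_u,C_v\subset\bbC\bbP^2$ be the plane curves $\{u=0\}$ and $\{v=0\}$. Since $u,v$ are homogeneous, the condition $u(x)=v(x)=0$ depends only on the line $\bbR x$, and $N_u\cap N_v$ is invariant under the antipodal map (because $u(-x)=(-1)^nu(x)$, and likewise for $v$). Hence the two-sheeted covering $\bbS^2\to\bbR\bbP^2$ restricts to a two-to-one map of $N_u\cap N_v$ onto the set of real common zeroes of $u,v$ in $\bbR\bbP^2$, and this latter set is contained in $C_u\cap C_v$. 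Therefore
\begin{eqnarray*}
\nu(u,v)=2\,\card\big(C_u\cap C_v\cap\bbR\bbP^2\big)\leq2\,\card\big(C_u\cap C_v\big),
\end{eqnarray*}
the complex intersection on the right being counted as a set of points.

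The main case is when $u$ and $v$ have no common factor in $\bbC[x_1,x_2,x_3]$. Then $C_u$ and $C_v$ share no irreducible component, so by the Bezout theorem their intersection in $\bbC\bbP^2$ is finite and, counted with multiplicities, has intersection number $n^2$; in particular $\card(C_u\cap C_v)\leq n^2$. Combined with the displayed inequality this gives $\nu(u,v)\leq2n^2$, which is (\ref{upper}).

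The real work, and the step I expect to be the main obstacle, is to show that the hypothesis ``$\nu(u,v)$ finite'' makes this coprime case the only relevant one. Passing to the real greatest common divisor $G$ (a common complex factor forces the real factor $G\ov G$, so $G$ may be taken real), I would write $u=Gu_1$, $v=Gv_1$ with $\gcd(u_1,v_1)=1$ and $d=\deg G\geq0$. Then $N_u\cap N_v$ is the union of $\{G=0\}\cap\bbS^2$ and $\{u_1=v_1=0\}\cap\bbS^2$. Finiteness of $\nu$ forbids $\{G=0\}\cap\bbS^2$ from containing an arc, so this set is finite, while the Bezout bound just proved applies to the coprime pair $u_1,v_1$ of degree $n-d$ and yields at most $2(n-d)^2$ points. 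It then remains to see that the finitely many isolated real points of $G$ on $\bbS^2$ fit inside the slack $2n^2-2(n-d)^2=2d(2n-d)$. This is where care is needed: such points are solitary (isolated) real points of a degree-$d$ plane curve, hence singular points of $C_G$, so their number is $O(d^2)$, which for $d\geq1$ is comfortably below $2d(2n-d)$; either way $\nu(u,v)\leq2n^2$. (Alternatively, when $G$ actually vanishes somewhere on $\bbS^2$ one can exclude a nonconstant $G$ outright: the lowest-order homogeneous part of an eigenfunction at each of its zeroes is a planar harmonic, hence a product of real linear forms, and no such form is divisible by a factor with an isolated real zero such as $x_1^2+x_2^2$; thus $\{G=0\}\cap\bbS^2$ can be neither a nonempty finite set nor an arc without contradicting finiteness, and a globally non-vanishing $G$ contributes no points at all.)
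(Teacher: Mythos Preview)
Your argument is correct, and the parenthetical ``alternative'' you sketch at the end is exactly the paper's route. The paper isolates that step as a separate lemma (Lemma~\ref{zerpr}): if a real $u\in\cH_n$ factors as $u=vw$ and $w$ has no zeroes on $\bbS^2$ near $x$ except possibly at $x$ itself, then in fact $w(x)\neq0$. The reason is the one you indicate: the lowest homogeneous part of $u$ at $x$ is $\Re\bigl(\la(x_1+ix_2)^k\bigr)$, a product of $k$ \emph{distinct} real linear forms, so the leading part of any real factor $w$ is either a nonzero constant or itself a product of such forms --- and in the latter case $w$ changes sign near $x$, contradicting isolatedness. From this the paper concludes at once that the real gcd $w$ has no zeroes on $\bbS^2$, hence $N_u\cap N_v=N_{u/w}\cap N_{v/w}$, and Bezout applied to the coprime quotients finishes.

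Your first route for the non-coprime case --- bounding the isolated real zeroes of $G$ by singular points of $C_G$ and absorbing them into the slack $2d(2n-d)$ --- is a genuinely different idea and can be made to work, but as written it has two soft spots. First, if $G$ is not squarefree over $\bbC$, the singular locus of $C_G$ is positive-dimensional and no finite ``$O(d^2)$'' count applies; you must pass to the radical of $G$ before invoking any singular-point bound. Second, the paper counts $\nu$ with multiplicities when $u,v$ are singular, so each isolated zero of $G$ would have to be weighted by the product of the local orders of $u$ and $v$ there, not merely counted once; your slack estimate does not control that. The paper's lemma sidesteps both issues by showing there are simply no such points to count, which is also why your own alternative made the detour unnecessary.
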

By the Bezout theorem, if $u,v\in\cP^3_n$ have no proper common
divisor, then the set $\{z\in\bbC^3:\,u(z)=v(z)=0\}$ is the union
of $n^2$ (with multiplicities) complex lines. Then
$\nu(u,v)\leq2n^2$ since each line has at most two common points
with $\bbS^2$. The proposition is not an immediate consequence of
this fact since $u,v$ may have a nontrivial common divisor which
has a finite number of zeroes in $\bbS^2$. This cannot happen for
$u,v\in\cH_n$ by the following lemma.
\begin{lemma}\label{zerpr}
Let $u\in\cH_n$ be real, $x\in\bbS^2$, and $u(x)=0$. Suppose that
$u=vw$, where $v\in\cP^3_m$, $w\in\cP^3_{n-m}$ are real. If
$w(y)\neq0$ for all $y\in\bbS^2\setminus\{x\}$ that are
sufficiently close to $x$, then $w(x)\neq0$.
\end{lemma}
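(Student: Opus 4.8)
The plan is to restrict everything to $\bbS^2$ and read off the local structure of the nodal set at $x$ from the leading homogeneous parts of the Taylor expansions of $u$, $v$ and $w$ in geodesic normal coordinates. Restricting the factorization $u=vw$ to $\bbS^2$ makes all three functions real analytic on $\bbS^2$, with $u|_{\bbS^2}$ an eigenfunction of the Laplace--Beltrami operator for the eigenvalue $-n(n+1)$. I argue by contradiction: assuming $w(x)=0$, I will produce a whole arc of zeroes of $w|_{\bbS^2}$ through $x$, contradicting the hypothesis that $x$ is an isolated zero.

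First I would choose geodesic normal coordinates $(s,t)$ on $\bbS^2$ centred at $x$ and write $u=P+\dots$, $v=P_v+\dots$, $w=Q+\dots$, where $P,P_v,Q\in\bbR[s,t]$ are the nonzero leading homogeneous parts, of degrees $k=\mathrm{ord}_x u$, $k_v=\mathrm{ord}_x v$ and $\ell=\mathrm{ord}_x w$. Because $\bbR[[s,t]]$ is an integral domain the leading parts multiply, so $P=P_vQ$ and $k=k_v+\ell$; the assumption $w(x)=0$ gives $\ell\geq1$, so $Q$ is a nonconstant divisor of $P$. The essential analytic input is the Bers--Cheng description of a two-dimensional eigenfunction near a zero: in geodesic normal coordinates the leading part $P$ of $u|_{\bbS^2}$ is a nonzero \emph{harmonic} homogeneous polynomial of degree $k$.

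The heart of the matter is then algebraic. A nonzero harmonic homogeneous polynomial of degree $k$ in two real variables equals, after a rotation, $\Re\big(c(s+it)^k\big)=|c|\,\rho^k\cos(k\theta+\gamma)$ in polar coordinates, so it is a product of $k$ \emph{pairwise non-proportional} real linear forms. By unique factorization in $\bbR[s,t]$ the divisor $Q$ of $P$ is a product of $\ell\geq1$ of these distinct forms, and hence has a real linear factor $L$ of multiplicity exactly one. Choosing coordinates so that $L=t$, I would write $Q=t\,\tilde Q$ with $\tilde Q(s,0)=cs^{\ell-1}$, $c\neq0$; then along $t=0$ one has $w(s,0)=O(s^{\ell+1})$ while $\partial_t w(s,0)=cs^{\ell-1}+O(s^{\ell})\neq0$ for small $s\neq0$. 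The implicit function theorem now yields, for each small $s\neq0$, a zero $(s,t(s))$ of $w$ with $t(s)\to0$, i.e.\ zeroes of $w|_{\bbS^2}$ distinct from $x$ and accumulating at it. This contradicts the hypothesis, so $w(x)\neq0$.

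The main obstacle is the second paragraph: everything rests on the fact that \emph{harmonicity} of the leading part $P$ forces it to split into \emph{distinct} real linear factors. It is exactly this splitting that prevents the factor $Q$ coming from an isolated zero of $w$ (whose leading part has no real linear factors) from dividing $P$ unless $\ell=0$. Once the Bers--Cheng normal form is granted, the factorization argument and the implicit-function step are routine.
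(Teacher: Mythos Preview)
Your argument is correct and rests on the same core idea as the paper: the leading homogeneous part of $u$ at $x$ is a two-variable harmonic polynomial, hence a product of \emph{distinct} real linear forms, so any nonconstant divisor coming from $w$ inherits a real linear factor and forces zeroes of $w$ arbitrarily close to $x$. The paper, however, never passes to geodesic coordinates or invokes Bers--Cheng: it sets $x=(0,0,1)$, expands $u=\sum_{j\ge k} p_j(x_1,x_2)x_3^{\,n-j}$ in the ambient variables, and reads off $\Delta_{\bbR^2}p_k=0$ directly from $\Delta_{\bbR^3}u=0$ (the $x_3^{\,n-k}$ coefficient involves only $p_k$ since $p_{k-2}=0$); the factorization $p_k=q_l r_s$ then follows from $u=vw$ in $\cP^3_n$, and the contradiction is obtained by the simpler observation that a product of distinct linear forms changes sign near the origin, hence so does $w$. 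Your route is slightly heavier (Bers--Cheng plus an implicit-function step) but has the merit that it would apply verbatim to eigenfunctions on any real-analytic surface, a generality not needed here.
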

\begin{proof}
We may assume $x=(0,0,1)$. If $u$ has a zero of multiplicity $k$
at $x$, then
\begin{eqnarray*}
u(x_1,x_2,x_3)=p_k(x_1,x_2)x_3^{n-k}+
p_{k+1}(x_1,x_2)x_3^{n-k-1}+\dots+p_{n}(x_1,x_2),
\end{eqnarray*}
where $p_j\in\cP_j^2$, $p_k\neq0$. Since $\Delta u=0$, we have
$\De p_k=0$. Hence,
\begin{eqnarray*}
p_k(x_1,x_2)=\Re(\la(x_1+ix_2)^k)
\end{eqnarray*}
for some $\la\in\bbC\setminus\{0\}$. Therefore, $p_k$ is the
product of $k$ distinct linear forms. Let
\begin{eqnarray*}
&w=q_l(x_1,x_2)x_3^{n-m-l}+
q_{l+1}(x_1,x_2)x_3^{n-m-l-1}+\dots+q_{n-m}(x_1,x_2),\\
&v=r_s(x_1,x_2)x_3^{m-s}+
r_{s+1}(x_1,x_2)x_3^{m-s-1}+\dots+r_{m}(x_1,x_2),
\end{eqnarray*}
where $q_j,r_j\in\cP^2_j$ and $q_l,r_s\neq0$.  Since
$p_k=q_lr_{s}$, we have $k=l+s$; moreover, either $q_l$ is
constant or it is the product of distinct linear forms. The latter
implies that it change its sign near $x$; then the same is true
for $w$, contradictory to the assumption. Hence $l=0$. Thus,
$q_l\neq0$ implies $w(x)=q_l(x)\neq0$.
\end{proof}
\begin{proof}[Proof of Proposition~\ref{verkh}]
Let $u,v\in\cH_n$ be real and $w$ be their greatest common
divisor. Clearly, $w$ is real. Since $N_u\cap N_v$ is finite,
zeroes of $w$ in $\bbS^2$ must be isolated; by Lemma~\ref{zerpr},
$w$ has no zero in $\bbS^2$. Applying the Bezout theorem to $u/w$
and $v/w$, we get the assertion.
\end{proof}
The equality in (\ref{upper}) holds, for example, for the
following pairs and for their small perturbations:
\begin{eqnarray}
&u=\phi_a,\quad v=\Re(x_2+ix_3)^n,\quad\mbox{where}\quad a={(1,0,0)};\label{maxcr}\\
&u=\Re(ix_2+x_3)^n,\quad v=\Re(x_1+ix_2)^n.\nonumber
\end{eqnarray}
\begin{corollary}\label{cries}
If the number of critical points for real $u\in\cH_n$ is finite,
then it does not exceed $2n^2$; in particular, this is true for a
generic real $u\in\cH_n$.
\end{corollary}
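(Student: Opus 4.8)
The plan is to exhibit the critical points of $u$ as common zeroes of two real harmonics of degree $n$ and then invoke Proposition~\ref{verkh}. For $u\in\cH_n$ I write $L_1u,L_2u,L_3u$ for the components of $x\times\nabla u$, i.e. $L_iu=\scal{e_i}{x\times\nabla u}$, where $e_1,e_2,e_3$ is the standard basis; these are the derivatives of $u$ along the rotation vector fields. Each $L_iu$ is again a real element of $\cH_n$, since the infinitesimal $\SO(3)$-action preserves $\cH_n$ and $\De L_iu=L_i\De u=0$. A point $x\in\bbS^2$ is critical for $u|_{\bbS^2}$ exactly when the tangential gradient vanishes, that is, when $\nabla u(x)$ is parallel to $x$, i.e. when $x\times\nabla u(x)=0$; hence the critical set $Z$ of $u$ is $\{L_1u=L_2u=L_3u=0\}\cap\bbS^2$. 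I shall also use the identity $x_1L_1u+x_2L_2u+x_3L_3u=\scal{x}{x\times\nabla u}=0$.

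First I would reduce from three functions to two. Consider $W=\{L_1u=L_2u=0\}\cap\bbS^2$, so that $Z\subseteq W$. For $x\in W$ the identity above gives $x_3\,L_3u(x)=0$, so either $x\in Z$ or $x$ lies on the equator $E=\{x_3=0\}\cap\bbS^2$; thus $W=Z\cup(W\cap E)$. Consequently $W$ will be finite as soon as $L_1u$ does not vanish identically on $E$. On $E$ one has $L_1u=x_2\,\partial_3u$, so it suffices to arrange that $\partial_3u$ is not identically zero on the great circle orthogonal to the $x_3$-axis.

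The main obstacle is precisely to guarantee the existence of such an axis, and here the hypothesis that $Z$ is finite is exactly what is needed. I would argue by contradiction: if for every unit vector $w$ the directional derivative $\scal{w}{\nabla u}$ vanished on the whole circle $w^\perp\cap\bbS^2$, then for each fixed $x$ the linear form $w\mapsto\scal{w}{\nabla u(x)}$ would vanish on the plane $w\perp x$, forcing $\nabla u(x)\parallel x$ for every $x\in\bbS^2$; that is, every point of $\bbS^2$ would be critical, contradicting the finiteness of $Z$. Hence some direction is admissible, and rotating $u$ to place it along the third axis does not change the number of critical points. With this choice $L_1u=x_2\,\partial_3u$ is a nonzero real-analytic function on $E$, so it has finitely many zeroes there; therefore $W\cap E$, and with it $W$, is finite.

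Finally I would apply Proposition~\ref{verkh} to the pair $L_1u,L_2u\in\cH_n$: since $N_{L_1u}\cap N_{L_2u}=W$ is finite, we obtain $\card Z\leq\card W\leq\nu(L_1u,L_2u)\leq2n^2$. For the concluding ``in particular'', a generic $u\in\cH_n$ is a Morse function on $\bbS^2$ and thus has only finitely many nondegenerate critical points, so the bound applies to it.
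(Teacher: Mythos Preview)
Your proof is correct and follows the same strategy as the paper: express the critical points of $u$ as common zeros of two $\so(3)$-derivatives of $u$ and invoke Proposition~\ref{verkh}. The paper's one-line proof simply asserts that $\xi,\eta\in\so(3)$ can be chosen so that the critical points are \emph{exactly} the common zeroes of $\xi u,\eta u$, whereas you work with the explicit rotations $L_1,L_2$ and explicitly control the spurious common zeros on their dependency locus (the equator $\{x_3=0\}$)---a step the paper's terse argument leaves to the reader.
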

\begin{proof}
If $x$ is a critical point of $u$, then $\xi u(x)=0$ for any
vector field $\xi\in\so(3)$. It is possible to choose two fields
$\xi,\eta\in\so(3)$ which do not annihilate $u$ and are
independent at all critical points; then the critical points of
$u$ are precisely the common zeroes of $\xi u,\eta u\in\cH_n$.
\end{proof}
\begin{remark}\rm
This bound is not sharp. At least, for $n=1,2$ the number of
critical points is equal to $2(n^2-n+1)$, if it is finite. Let
$u,v$ be as in (\ref{maxcr}). Then $u+\ep v$, where $\ep$ is
small, has $2(n^2-n+1)$ critical points. I know no example of a
spherical harmonic with a greater (finite) number of critical
points.
\end{remark}
\begin{remark}\rm\label{degen}
The consideration above proves a bit more than
Corollary~\ref{cries} says. A nontrivial orbit of $u$ under
$\SO(3)$ is either 3-dimensional or 2-dimensional, and the latter
holds if and only if $u=c\phi_a$ for some constant $c$ and $a\in
\bbS^2$. In the first case, the set $C$ of critical points of $u$
is precisely the set of common zeroes of three linearly
independent spherical harmonics (a base for the tangent space to
the orbit of $u$). Hence, $\codim\cN_C\geq3$. Note that generic
three harmonics have no common zero. Thus, {the configuration of
critical points is always degenerate}. The problem of estimation
of the number of critical points, components of nodal sets, nodal
domains, etc., for spherical harmonics on $\bbS^2$ was stated in
\cite{AV}.
\end{remark}
\begin{proposition}\label{finde}
The set $\cI$ of functions  $f=u+iv\in\cH_n$ such that
$\nu(u,v)=\infty$ is closed and nowhere dense in $\cH_n$.
\end{proposition}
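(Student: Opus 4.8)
The plan is to reduce both assertions to the behaviour of the greatest common divisor of $\Re f$ and $\Im f$, exactly as in the proof of Proposition~\ref{verkh}. Write $f=u+iv$ with $u=\Re f,\ v=\Im f\in\cH_n$ real, and let $w=\gcd(u,v)$, which is real. The zero set of $f$ on $\bbS^2$ is $N_u\cap N_v$, and $N_u\cap N_v=N_w\cup(N_{u/w}\cap N_{v/w})$, where the second set is finite by the Bezout theorem (since $u/w,v/w$ are coprime). Hence $\nu(u,v)=\infty$ if and only if $N_w\cap\bbS^2$ is infinite, and by Lemma~\ref{zerpr} (a factor of a harmonic has no isolated zero on $\bbS^2$) this happens if and only if $w$ vanishes \emph{somewhere} on $\bbS^2$. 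Thus
\[
\cI=\{\,f=u+iv\in\cH_n:\ \gcd(u,v)\ \text{has a zero on}\ \bbS^2\,\}.
\]

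For nowhere density I would use that $\cI$ is contained in the common-factor locus $Z=\{f:\ \gcd(\Re f,\Im f)\neq\const\}$: if $u,v$ are coprime, the Bezout theorem forces $N_u\cap N_v$ finite, so $\cI\subseteq Z$. The set $Z$ is a proper real-algebraic subset of the real vector space $\cH_n$ (existence of a common factor is cut out by resultant conditions on the coefficients, and generic $u,v$ are coprime). A proper algebraic subset is closed with empty interior, so $Z$, and hence its subset $\cI$, is nowhere dense.

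For closedness I would argue directly. Let $f_j\to f$ with $f_j\in\cI$; if $f=0\in\cI$ we are done, so assume $f\neq0$ and, for contradiction, $f\notin\cI$, i.e. $N_{\Re f}\cap N_{\Im f}=\{x_1,\dots,x_m\}$ is finite. For each $j$ pick $y_j\in\bbS^2$ with $\gcd(\Re f_j,\Im f_j)(y_j)=0$; then $f_j(y_j)=0$. Since $f_j\to f$ uniformly and $|f|$ is bounded below off small neighbourhoods of the finite set $\{x_i\}$, a subsequence of $y_j$ converges to some $x_i$. Let $w_j=\gcd(\Re f_j,\Im f_j)$; passing to a further subsequence on which $\deg w_j$ is constant and normalizing $\|w_j\|=1$, we may assume $w_j\to w_*$ with $w_*\neq0$. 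Because multiplication by a fixed nonzero form is an injective linear map on the finite-dimensional space of forms of the complementary degree, it is bounded below; writing $\Re f_j=w_j q_j$ and $\Im f_j=w_j q_j'$ shows $q_j,q_j'$ stay bounded, so in the limit $w_*\mid\Re f$ and $w_*\mid\Im f$. Moreover $w_*(x_i)=\lim w_j(y_j)=0$. Now $w_*$ is a nonzero factor of a harmonic in $\cH_n$ (take whichever of $\Re f,\Im f$ is nonzero) vanishing at $x_i$, so by Lemma~\ref{zerpr} the set $N_{w_*}$ is infinite; but $N_{w_*}\subseteq N_{\Re f}\cap N_{\Im f}=\{x_1,\dots,x_m\}$ is finite, a contradiction. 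Hence $f\in\cI$ and $\cI$ is closed.

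The main obstacle is the last step: factorization is not continuous, so passing the common factor $w_j$ to a limit dividing $\Re f$ and $\Im f$ requires the normalization together with the boundedness-below of multiplication by a nonzero form; equivalently, one must rule out that a whole vanishing curve of $f_j$ collapses to an isolated zero of $f$. This is precisely where the harmonic rigidity of Lemma~\ref{zerpr} (no isolated zeros for factors of harmonics) is indispensable, since for general forms such collapse does occur (e.g. $x_1^2+x_2^2-\varepsilon r^2\to x_1^2+x_2^2$ on $\bbS^2$).
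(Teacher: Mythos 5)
Your proof is correct, but your closedness argument is genuinely different from the paper's. The paper proceeds topologically: if $N_u\cap N_v$ is infinite it contains (by real analyticity and Cheng's structure theorem \cite{Ch}) a Jordan contour, and an eigenvalue comparison --- a contour bounding a region inside a small disc contained in a nodal domain would force the first Dirichlet eigenvalue of that disc to be at most $n(n+1)$ --- bounds the diameter of this contour from below uniformly over $\cH_n$; contours of definite size survive passage to a uniform limit, so $\cI$ is closed. You instead work algebraically: you first identify $\cI$ as the set of $f$ for which $\gcd(\Re f,\Im f)$ vanishes somewhere on $\bbS^2$ (via Bezout plus Lemma~\ref{zerpr}, exactly the mechanism of Proposition~\ref{verkh}), and then pass normalized common factors $w_j$ of fixed degree to a limit $w_*$ dividing $\Re f$ and $\Im f$, invoking Lemma~\ref{zerpr} once more to rule out an isolated zero of $w_*$. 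The nowhere-density halves essentially coincide (Bezout places $\cI$ inside the proper algebraic common-factor locus). Your route is more self-contained --- it reuses only the algebraic lemmas of this section and yields the clean description of $\cI$ as the locus where the common factor meets the sphere, which also illuminates the closing remark that $N_u\cap N_v$ tends to be a union of curves --- while the paper's route is shorter given the quoted machinery and gives quantitative information (a uniform lower bound on the diameter of the infinite part of $N_u\cap N_v$). One small point to tighten: boundedness of the cofactors $q_j$, $q_j'$ requires multiplication by $w_j$ to be bounded below \emph{uniformly} in $j$, not merely for a fixed nonzero form; this does follow from $\|w_j\|=1$ and $w_j\to w_*\neq0$, since the smallest singular value of the multiplication operator depends continuously on the form, but it should be stated explicitly.
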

\begin{proof}
If $N_u\cap N_v$ is infinite, then it contains a Jordan arc which
extends to a contour since $u$ and $v$ are real analytic (by
\cite{Ch}, a nodal set, outside of its finite subset, is the
finite union of smooth arcs). This contour cannot be included into
a disc $D$ which is contained in some of nodal domains: otherwise,
its first Dirichlet eigenvalue would be greater than $n(n+1)$.
Therefore, diameter of the contour is bounded from below. This
implies that $\cI$ is closed. If $f\in\cI$, then $u$ and $v$ have
a nontrivial common divisor due to the Bezout theorem; hence,
$\cI$ is nowhere dense.
\end{proof}
In examples known to me, if $f\in\cI$, then $N_u\cap N_v$ is the
union of circles.

\section{Estimates of nodal length and inner radius}
Let $M$ be a $C^\infty$-smooth compact connected Riemannian
manifold, $m=\dim M$, $\frh^k$ be the $k$-dimensional Hausdorff
measure on $M$. Yau conjectured that there exists positive
constant $c$ and $C$ such that
\begin{eqnarray*}
c\sqrt{\la}\leq\frh^{m-1}(N_u)\leq C\sqrt{\la}
\end{eqnarray*}
for the nodal set $N_u$ of any eigenfunction $u$ corresponding to
the eigenvalue $-\la$. For real analytic $M$, this conjecture was
proved by Donnelly and Fefferman in \cite{DF}. In the case of a
surface, lower bounds were obtained in papers \cite{Br} and
\cite{Sa}; in \cite{Sa}, $c=\frac1{11}\mathop{\mathrm{Area}}(M)$.

We consider first the case $M=\bbS^{m}\subset\bbR^{m+1}$,
$m\geq1$. Set
\begin{eqnarray*}
\psi(x)=\Re(x_1+ix_2)^n.
\end{eqnarray*}
Clearly, $\psi\in\cH_n^{m+1}$. Let $\phi$ denote a zonal spherical
harmonic; we omit the index since the geometric quantities that
characterize $N_\phi$ are independent of it. Set
\begin{eqnarray*}
\om_k=\frh^k(\bbS^k)=\frac{2\pi^{\frac{k+1}2}}
{\Ga\left(\frac{k+1}{2}\right)}\,.
\end{eqnarray*}
\begin{theorem}\label{shupp}
For any nonzero real $u\in\cH_n^{m+1}$,
\begin{eqnarray}\label{estno}
\frh^{m-1}(N_u)\leq\frh^{m-1}(N_\psi)=n\om_{m-1}.
\end{eqnarray}
\end{theorem}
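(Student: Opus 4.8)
The plan is to use an integral-geometric (Crofton) formula to reduce the estimate of $\frh^{m-1}(N_u)$ to a pointwise count of the zeroes of $u$ along great circles, where the degree $n$ gives the bound immediately. Let $G=\SO(m+1)$ carry normalized Haar measure $dg$, fix a great circle $\gamma_0\subset\bbS^m$, and invoke the $\bbS^m$-analogue of Theorem~\ref{feder},
$$\frh^{m-1}(\Sigma)=c_m\int_G\#(\Sigma\cap g\gamma_0)\,dg,$$
valid for $(m-1)$-rectifiable $\Sigma$ with $c_m$ depending only on $m$. The essential point is that I never need the value of $c_m$: I apply the formula to both $N_u$ and $N_\psi$ and compare the two counting functions.

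First I would establish the pointwise bound. Parametrizing a great circle as $\gamma(t)=(\cos t)e_1+(\sin t)e_2$ with $e_1,e_2$ orthonormal, the restriction $u(\gamma(t))$ is a homogeneous polynomial of degree $n$ in $(\cos t,\sin t)$, hence a trigonometric polynomial of degree at most $n$. Thus, unless $u$ vanishes identically on $\gamma$, it has at most $2n$ zeroes there, so $\#(N_u\cap g\gamma_0)\le 2n$. Since $u\neq 0$ is a polynomial, the circles on which $u\equiv 0$ (equivalently, on which $u$ vanishes on the whole $2$-plane spanning $\gamma$) satisfy finitely many polynomial equations in $(e_1,e_2)$; this is a proper closed analytic condition, hence a set of measure zero in $G$, so the bound $\#(N_u\cap g\gamma_0)\le 2n$ holds for almost every $g$.

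Next I would compute the model case. Writing $x_1+ix_2=\rho e^{i\vartheta}$, the nodal set of $\psi=\Re(x_1+ix_2)^n$ is the locus $\cos(n\vartheta)=0$ together with the negligible set $\{\rho=0\}=\{x_1=x_2=0\}\cap\bbS^m$. The $2n$ solutions $\vartheta=\frac{(2k+1)\pi}{2n}$ pair antipodally into $n$ lines through the origin in the $(x_1,x_2)$-plane; each line spans, with the $x_3,\dots,x_{m+1}$-directions, a hyperplane $H_j$, so $N_\psi=\bigcup_{j=1}^n(H_j\cap\bbS^m)$ is a union of $n$ great subspheres $\bbS^{m-1}$ with pairwise intersections of dimension $m-2$. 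Hence $\frh^{m-1}(N_\psi)=n\,\om_{m-1}$, the claimed right-hand side; and a generic great circle meets each $H_j\cap\bbS^m$ transversally in exactly $2$ points, so $\#(N_\psi\cap g\gamma_0)=2n$ for almost every $g$.

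Finally I would combine these. Applying the Crofton formula to $N_\psi$ gives $n\om_{m-1}=c_m\cdot 2n$, fixing $c_m=\om_{m-1}/2$; applying it to $N_u$ with $\#(N_u\cap g\gamma_0)\le 2n$ a.e. gives $\frh^{m-1}(N_u)\le c_m\cdot 2n=n\om_{m-1}$, which is (\ref{estno}). The main obstacle lies not in the counting but in the measure-theoretic hypotheses behind the Crofton formula for $N_u$, which is only a real-analytic variety and may be singular: one must know that $N_u$ is $(m-1)$-rectifiable with finite $\frh^{m-1}$-measure, that its singular set is $\frh^{m-1}$-null, and that for almost every $g$ the circle $g\gamma_0$ meets $N_u$ only at regular points and transversally, so the counting function is a.e.\ finite and the formula applies. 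These facts follow from the real-analytic structure of nodal sets (the higher-dimensional analogue of the arc decomposition of \cite{Ch}) together with a generic transversality argument, and this verification is where the real work resides.
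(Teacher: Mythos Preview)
Your proof is correct and follows essentially the same route as the paper: apply the integral-geometric formula (Theorem~\ref{feder}) with $A=\bbS^1$ a great circle and $B=N_u$, use that the restriction of $u$ to the span of $\bbS^1$ is a degree-$n$ polynomial in two variables (hence at most $2n$ zeroes on the circle), and compare with the model $N_\psi$ where equality holds. The paper dispatches your measure-theoretic worries in one line---$N_u$ is $(m-1)$-rectifiable because $u$ is a polynomial---after which Federer's theorem applies directly, so the transversality and stratification issues you flag do not require separate treatment.
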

The theorem is simply an observation modulo the following fact (a
particular case of Theorem~3.2.48 in \cite{Fe}). A set which can
be realized as the image of a bounded subset of $\bbR^k$ under a
Lipschitz mapping is called {\it $k$-rectifiable} (we consider
only the sets which can be realized as the countable union of
compact sets). Since $u\in\cH_n^{m+1}$ is a polynomial, the set
$N_u$ is $(m-1)$-rectifiable. Let $\mu_m$ denote the invariant
measure on $\OO(m+1)$ with the total mass $1$.
\begin{theorem}[\cite{Fe}]\label{feder}
Let $A,B\subseteq \bbS^{d}$ be compact, $A$ be $k$-rectifiable,
and $B$ be $l$-rectifiable. Set $r=k+l-d$. Suppose $r\geq0$. Then
\begin{eqnarray}\label{intca}
\int_{\OO(d)}\frh^r(A\cap gB)\,d\mu_d(g)=K\frh^k(A)\frh^l(B),
\end{eqnarray} where
$\displaystyle K=\frac{\Ga\left(\frac{k+1}{2}\right)
\Ga\left(\frac{l+1}{2}\right)}{2\Ga\left(\frac{1}{2}\right)^d
\Ga\left(\frac{r+1}{2}\right)}=\frac{\om_r}{\om_k\om_l}$.  \qed
\end{theorem}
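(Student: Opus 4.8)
The plan is to treat the left-hand side of (\ref{intca}) as a functional of the pair $(A,B)$, show it is bilinear in the Hausdorff measures with a universal constant, and then pin that constant down by a single explicit example. Throughout write $G=\OO(d+1)$ for the rotation group acting on $\bbS^d$ by isometries, with $\mu_d$ its invariant probability measure, and set $F(A,B)=\int_G\frh^r(A\cap gB)\,d\mu_d(g)$. First I would reduce to the case in which $A$ and $B$ are compact pieces of $C^1$ submanifolds of dimensions $k$ and $l$: by definition of rectifiability, up to sets of $\frh^k$- (resp.\ $\frh^l$-) measure zero, $A$ and $B$ are countable disjoint unions of such pieces, and since for almost every $g$ the submanifolds $A$ and $gB$ meet transversally (so that $A\cap gB$ is an $r$-dimensional $C^1$ submanifold off a null set of $g$), both sides of (\ref{intca}) are countably additive in $A$ and in $B$. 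The exceptional sets of $g$ are negligible because, by the elementary Eilenberg integralgeometric inequality, $F(E,B)\le C\,\frh^k(E)\,\frh^l(B)$, so $F(\cdot,B)$ is absolutely continuous with respect to $\frh^k$.

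Next I would exploit invariance. For fixed $B$, the set function $A\mapsto F(A,B)$ is a Borel measure on $A$, and it is $G$-invariant: if $h\in G$ then $hA\cap gB=h\bigl(A\cap h^{-1}gB\bigr)$, so isometry-invariance of $\frh^r$ together with invariance of $\mu_d$ under $g\mapsto hg$ gives $F(hA,B)=F(A,B)$. By the previous paragraph this measure has a density with respect to $\frh^k|_A$, and a blow-up (tangent-plane) analysis at a point $a$ shows the density depends only on the approximate tangent plane $T_aA$. Since $G$ acts transitively on the bundle of tangent $k$-planes of $\bbS^d$ (its point-stabilizer $\OO(d)$ acts transitively on $k$-planes in the tangent space), the density is a single constant $\lambda(B)$, whence $F(A,B)=\lambda(B)\,\frh^k(A)$. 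The substitution $g\mapsto g^{-1}$, valid because $\mu_d$ is inversion-invariant, gives $F(A,B)=F(B,A)$, so symmetrically $F(A,B)=\lambda'(A)\,\frh^l(B)$; comparing the two expressions forces $\lambda(B)=c\,\frh^l(B)$ for a constant $c=c(k,l,d)$, and therefore $F(A,B)=c\,\frh^k(A)\,\frh^l(B)$.

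It remains to evaluate $c$, and here I would choose the most transparent test configuration: let $A=\bbS^d\cap V$ and $B=\bbS^d\cap W$ be great subspheres of dimensions $k$ and $l$, where $V,W$ are linear subspaces of $\bbR^{d+1}$ of dimensions $k+1$ and $l+1$. Then $A\cap gB$ is exactly the unit sphere of $V\cap gW$; for $\mu_d$-almost every $g$ the subspaces $V$ and $gW$ are in general position, so $\dim(V\cap gW)=r+1$ and $A\cap gB$ is a round $r$-sphere with $\frh^r(A\cap gB)=\om_r$ (the non-generic $g$ form a proper real-algebraic subvariety, hence a $\mu_d$-null set). Consequently $F(A,B)=\om_r$, while $\frh^k(A)\frh^l(B)=\om_k\om_l$, so $c=\om_r/(\om_k\om_l)=K$, which is the asserted constant.

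The main obstacle is the local density step of the second paragraph: one must justify that for a $\frh^k$-rectifiable set the blow-ups at $\frh^k$-almost every point are flat $k$-planes, and compute the resulting transversality (coarea) Jacobian, confirming that after the group average only the tangent plane, and not the finer geometry of $B$, survives. This is exactly where the coarea formula enters and is the technical heart of the argument; the remaining ingredients---additivity, invariance, and the explicit subsphere computation---are comparatively routine.
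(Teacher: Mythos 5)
Your outline is correct and follows essentially the route the paper itself indicates: the theorem is quoted from Federer (Theorem~3.2.48), and the remark immediately after it sketches exactly your strategy --- additivity of both sides in $A$ and $B$ plus verification of the local asymptotics on small pieces of $C^1$ submanifolds --- with your great-subsphere test configuration supplying the normalization $c=\om_r/(\om_k\om_l)$. (Your constant agrees with the $\om_r/(\om_k\om_l)$ form that the paper actually uses later, e.g.\ $K=\frac1{2\pi^2}$ for $k=l=1$, $d=2$; the Gamma-quotient displayed in the statement is off by one factor of $\Ga\left(\frac12\right)$ in the denominator, evidently a typo.)
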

If $r=0$, then the left-hand side of (\ref{intca}) is a version of
the Favard measure  for spheres (on $A$ or $B$). Also, note that
(\ref{intca}) can be proved directly in this setting since the
left-hand side, for fixed $A$ (or $B$), is additive on finite
families of disjoint compact sets; thus, it is sufficient to check
its asymptotic behavior on small pieces of submanifolds.
\begin{lemma}
For any real $u\in\cH_n^{m+1}$ and each big circle $\bbS^1$ in
$\bbS^{m}$, if $\bbS^1\cap N_u$ is finite, then
\begin{eqnarray}\label{estci}
\card (\bbS^1\cap N_u)\leq2n.
\end{eqnarray}
\end{lemma}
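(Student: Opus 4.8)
The plan is to restrict $u$ to the great circle, recognize the restriction as a trigonometric polynomial of degree at most $n$, and then invoke the classical bound on the number of zeroes of such a polynomial.

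First I would note that a big circle $\bbS^1$ in $\bbS^m$ is the intersection of $\bbS^m$ with a two-dimensional linear subspace of $\bbR^{m+1}$. Choosing an orthonormal basis $e_1,e_2$ of that subspace, I parametrize $\bbS^1$ by $v(\theta)=(\cos\theta)e_1+(\sin\theta)e_2$, $\theta\in[0,2\pi)$, and set $g(\theta)=u(v(\theta))$. Since $u$ is a homogeneous polynomial of degree $n$ and each coordinate of $v(\theta)$ is a linear combination of $\cos\theta$ and $\sin\theta$, the function $g$ is a homogeneous polynomial of degree $n$ in the pair $(\cos\theta,\sin\theta)$. Expanding the powers of $\cos\theta$ and $\sin\theta$ in exponentials, $g$ is a trigonometric polynomial of degree at most $n$, that is, $g(\theta)=\sum_{k=-n}^{n}c_k e^{ik\theta}$. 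Harmonicity of $u$ plays no role here; the bound holds for any homogeneous polynomial of degree $n$.

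Because $\bbS^1\cap N_u$ is assumed finite, $g$ does not vanish identically on the circle. Then $e^{in\theta}g(\theta)$ is the restriction to the unit circle $|z|=1$ of a nonzero algebraic polynomial in $z=e^{i\theta}$ of degree at most $2n$, which has at most $2n$ roots. Its roots on $|z|=1$ are exactly the points of $\bbS^1\cap N_u$, whence $\card(\bbS^1\cap N_u)\leq2n$.

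The argument is essentially routine, so I expect no genuine obstacle. The only points requiring care are confirming that the restriction really has all its frequencies bounded by $n$, and recalling the standard fact that a nonzero trigonometric polynomial of degree $n$ has at most $2n$ zeroes per period; both follow from the substitution $z=e^{i\theta}$, which reduces the question to counting roots of a degree-$2n$ algebraic polynomial.
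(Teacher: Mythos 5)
Your proof is correct and follows essentially the same route as the paper, which simply observes that the restriction of $u$ to the two-dimensional linear span of the big circle is a homogeneous polynomial of degree $n$ in two variables and leaves the zero count implicit. Your trigonometric-polynomial count via the substitution $z=e^{i\theta}$ is just an explicit way of completing that one-line argument (the paper's implicit completion would be factoring the binary form into at most $n$ linear factors, each line meeting the circle twice), and you rightly note that harmonicity is irrelevant here.
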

\begin{proof}
The restriction of $u$ to the linear span of $\bbS^1$, which is
2-dimensional, is a homogeneous polynomial of degree $n$ of two
variables.
\end{proof}
\begin{proof}[Proof of Theorem~\ref{shupp}]
Since $\bbS^1$ intersects  in two points any hyperplane which does
not contain it, for almost all $g\in\OO(m+1)$ we have
\begin{eqnarray*}
\card (g\bbS^1\cap N_u)\leq2n= \card (g\bbS^1\cap N_\psi).
\end{eqnarray*}
Integrating over $\OO(m+1)$ and applying (\ref{intca}) with $k=1$,
$l=m-1$, $A=\bbS^1$, $B=N_u$ and $B=N_\psi$, we get the inequality
in (\ref{estno}). The equality is evident.
\end{proof}
A lower bound can also be obtained in this way. In what follows,
we assume $k=l=1$ and $m=2$; then $K=\frac1{2\pi^2}$, and
(\ref{estno}) read as follows:
\begin{eqnarray}\label{upps2}
\frh^1(N_u)\leq2\pi n.
\end{eqnarray}
The nodal set $N_\phi$ of a zonal spherical harmonic
$\phi=\phi_a\in\cH_n$, where $a\in \bbS^2$, is the union of
parallel circles of Euclidean radii $\sqrt{1-t_k^2}$, where $t_k$
are zeroes of the Legendre polynomial $P_n$. The smallest circle
corresponds to the greatest zero $t_n$. Set $r_n=\sqrt{1-t_n^2}$
and let $C_n$ be a circle in $\bbS^2$ of Euclidean radius $r_n$.
By Proposition~\ref{fromp}, for any $u\in\cH_n$,
\begin{eqnarray}\label{intno}
\card(gC_n\cap N_u)\geq2\quad\mbox{for all}\quad g\in\OO(3).
\end{eqnarray}
Due to (\ref{intca}),
\begin{eqnarray*}
\frh^1(N_u)\geq\frac{2\pi}{r_n}\,.
\end{eqnarray*}
By \cite[Theorem~6.3.4]{Sz}, $t_n=\cos \theta_n$, where
\begin{eqnarray}\label{estth}
0<\theta_n<\frac{j_0}{n+\frac12}
\end{eqnarray}
and $j_0\approx2.4048$ is the least positive zero of Bessel
function $J_0$. This estimate, by \cite[(6.3.15)]{Sz}, is
asymptotically sharp: $\lim_{n\to\infty}n\theta_n=j_0$. Thus,
\begin{eqnarray*}
r_n=\sin\theta_n<\sin\frac{j_0}{n+\frac12}<\frac{j_0}{n+\frac12},
\end{eqnarray*}
and we get
\begin{eqnarray}\label{eslow}
\frh^1(N_u)>\frac{2\pi}{j_0}\left(n+\frac12\right).
\end{eqnarray}
The bound (\ref{eslow}) is not the best one but it is greater than
$\frac1{11}$\,Area$\,(M)\sqrt{\la}$, the bound  of paper
\cite{Sa}:
\begin{eqnarray*}
\frac{4\pi}{11}\sqrt{n(n+1)}<\frac{2\pi}{j_0}\left(n+\frac12\right),
\end{eqnarray*}
since $\frac{4\pi}{11}\approx1.4248$,
$\frac{2\pi}{j_0}\approx2.6127$; according to \cite{Sa},
$\frac1{11}$\,Area$\,(M)\sqrt{\la}\,$ estimates from below the
nodal length for all closed Riemannian surfaces $M$ (for
sufficiently large $\la$ in general and for all $\la$ if the
curvature is nonnegative). The length of the nodal set of a zonal
harmonic could be the sharp lower bound. According to
\cite[(6.21.5)]{Sz},
$\frac{k-\frac12}{n+\frac12}\pi\leq\tau_{n-k}\leq\frac{k}{n+\frac12}\pi$,
where $\cos\tau_k$, $k=0,\dots,n-1$, are the zeroes of $P_n$ in
the order of decreasing (i.e., $\tau_1=\theta_n$). Hence
\begin{eqnarray*}
\frh^1(N_\phi)=2\pi\sum_{k=1}^n\sin\theta_k\approx2\pi
n\int_0^1\sin\pi x\,dx=4n
\end{eqnarray*}
as $n\to\infty$. If this is true, then the upper bound is rather
close to the lower one since their ratio tends to $\frac\pi2$ as
$n\to\infty$.

It is also possible to estimate the {\it inner radius} of
$\bbS^2\setminus N_u$:
\begin{eqnarray*}
\inr(\bbS^2\setminus N_u)=\sup\left\{\,\inf\nolimits_{y\in
N_u}\rho(x,y):\,{x\in\bbS^2}\right\},
\end{eqnarray*}
where $\rho$ is the inner metric in $\bbS^2$:
\begin{eqnarray*}
\rho(x,y)=\arccos\scal{x}{y}.
\end{eqnarray*}
The least upper bound is evident:
\begin{eqnarray*}
\inr(\bbS^2\setminus N_u)\leq\inr(\bbS^2\setminus N_\phi)=\theta_n
\end{eqnarray*}
by (\ref{estth}). Indeed, it is attained for $u=\phi$ and cannot
be greater since the circle $C_n$ intersects any nodal set by
Proposition~\ref{fromp}. Let $C(\theta)$ be the a circle of radii
$\theta$ in the inner metric of $\bbS^2$; then Euclidean radius of
$C(\theta)$ is $r=\sin\theta$. A number $\theta_0>0$ is a lower
bound for the inner radius if and only if the following conditions
hold:
\begin{itemize}
\item[(\romannumeral1)] $\theta_0\leq\theta_n$,
\item[(\romannumeral2)] for each real $u\in\cH_n$, there exists
$g\in\OO(3)$ such that $gC(\theta_0)\cap N_u=\emptyset$.
\end{itemize}
(note that the disc bounded by $C(\theta_0)$ cannot contain a
component of $N_u$ due to (\romannumeral1)). Further, for almost
all $g\in\OO(3)$ the number $\card(gC(\theta_0)\cap N_u)$ is even.
Therefore, we may assume that
\begin{eqnarray*}
\card(gC(\theta_0)\cap N_u)\geq2
\end{eqnarray*}
if $gC(\theta_0)\cap N_u\neq\emptyset$. Set $r_0=\sin\theta_0$. If
(\romannumeral2) is false then
\begin{eqnarray*}
2\leq
\frac1{2\pi^2}\frh^1(C(\theta_0))\frh^1(N_u)=\frac{r_0}{\pi}\,\frh^1(N_u)\leq2r_0n
\end{eqnarray*}
by (\ref{intca}). Thus,  if $r_0<\frac1n$, then $\theta_0$ is a
lower bound for $\inr(\bbS^2\setminus N_u)$. Hence
$\arcsin\frac1{n}$ is a lower bound for $\inr(\bbS^2\setminus
N_u)$. The estimate seems to be non-sharp; perhaps, the least
inner radius has the set $\bbS^2\setminus N_\psi$ (it is equal to
$\frac{\pi}{2n}$).

We summarize the results on $\bbS^2$.
\begin{theorem}
Let $M=\bbS^2$. For any nonzero real $u\in\cH_n$,
\begin{eqnarray}
&\frac{2\pi}{j_0}\left(n+\frac12\right)<\frh^1(N_u) \leq2\pi n,\label{len}\\
&\arcsin\frac1{n}\leq\inr\left(\bbS^2\setminus
N_u\right)\leq\theta_n<\frac{j_0}{n+\frac12}\label{inr}.
\end{eqnarray}
In {\rm(\ref{len})}, the upper bound is attained if $u=\psi$; the
upper bound $\theta_n$ in {\rm(\ref{inr})} is attained for
$u=\phi$.\qed
\end{theorem}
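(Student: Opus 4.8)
The plan is to treat the theorem as a compilation of the four estimates already obtained in this section, so that the proof reduces to citing the correct ingredient for each of the two displayed inequalities together with the two extremal cases. The two engines are the integral-geometric identity (\ref{intca}) of Theorem~\ref{feder} specialized to $k=l=1$, $d=2$ (where the constant becomes $K=\frac1{2\pi^2}$), and Proposition~\ref{fromp}, which forces $N_u$ to meet the nodal circles of a zonal harmonic in at least two points.

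For the length estimates (\ref{len}) I would argue as follows. The upper bound is the case $m=2$ of Theorem~\ref{shupp}: since $\om_1=2\pi$, inequality (\ref{estno}) reads $\frh^1(N_u)\le2\pi n$, with equality for $u=\psi$ because $\frh^1(N_\psi)=n\om_1$. For the lower bound take the innermost of the parallel nodal circles of $\phi=\phi_a$, namely $C_n$ of Euclidean radius $r_n=\sin\theta_n$. Since the zeroes of $P_n$ are simple, every rotate $g\phi$ is regular and $gC_n$ is one of its nodal components, so Proposition~\ref{fromp} (applied with regular reference function $g\phi$ and arbitrary $u$) gives $\card(gC_n\cap N_u)\ge2$ for all $g$. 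Feeding this into (\ref{intca}) yields $\frh^1(N_u)\ge\frac{2\pi}{r_n}$, and the bound $r_n=\sin\theta_n<\frac{j_0}{n+\frac12}$ coming from (\ref{estth}) produces the left side of (\ref{len}), i.e.\ (\ref{eslow}).

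For the inner-radius estimates (\ref{inr}) the upper bound $\theta_n=\inr(\bbS^2\setminus N_\phi)$ is attained at $u=\phi$ and is maximal since $C_n$ meets every $N_u$ (again Proposition~\ref{fromp}), while $\theta_n<\frac{j_0}{n+\frac12}$ is (\ref{estth}). For the lower bound I would fix $\theta_0\le\theta_n$ with $r_0=\sin\theta_0<\frac1n$ and argue by contradiction: if every rotate $gC(\theta_0)$ met $N_u$, then, using that for almost all $g$ the intersection number is even and hence at least $2$, identity (\ref{intca}) would give $2\le\frac{r_0}{\pi}\frh^1(N_u)\le2r_0n$, contradicting $r_0<\frac1n$. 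Hence some $gC(\theta_0)$ avoids $N_u$; by condition (\romannumeral1) the disc it bounds is too small (its first Dirichlet eigenvalue exceeds $n(n+1)$) to contain a component of $N_u$, so its centre lies at inner distance $\ge\theta_0$ from $N_u$. Letting $r_0\uparrow\frac1n$ gives $\inr(\bbS^2\setminus N_u)\ge\arcsin\frac1n$.

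Because every step is already carried out in the preceding discussion, there is no genuine obstacle; the points that require care are the correct invocation of Proposition~\ref{fromp} (the regular reference function is the zonal harmonic $g\phi$, whereas $u$ itself is arbitrary and need not be regular), the almost-everywhere parity of the counting function $\card(gC\cap N_u)$ that upgrades nonemptiness of the intersection to at least two points, and the limiting passage $r_0\uparrow\frac1n$ together with the Courant-type eigenvalue comparison that prevents a disc of inner radius $\le\theta_n$ from enclosing a nodal component.
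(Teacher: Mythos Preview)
Your proposal is correct and follows the paper's own argument essentially line for line: the upper length bound via Theorem~\ref{shupp}, the lower length bound via (\ref{intno}) and (\ref{intca}) combined with (\ref{estth}), and the inner-radius bounds via the same integral-geometric identity together with the parity observation and the Dirichlet-eigenvalue comparison, exactly as in the discussion preceding the theorem. Your explicit remark that Proposition~\ref{fromp} is applied with $g\phi$ as the regular reference harmonic (and the given $u$ playing the role of $v$) is the one point the paper leaves implicit, and you have it right.
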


\section{Mean Hausdorff measure of intersections
of the nodal sets}

Let us fix $m\geq2$ and the unit sphere
$\bbS^{m}\subset\bbR^{m+1}$. We shall find the mean value over
$u_1,\dots,u_k$, $k\leq m$, of the Hausdorff measure of sets
\begin{eqnarray*}
N_{u_1}\cap\dots\cap N_{u_k}\subset\bbS^{m}.
\end{eqnarray*}
If $k=m$, then this is the mean number of common zeroes of
$u_1,\dots,u_m$ in $\bbS^{m}$. Set
\begin{eqnarray*}
\bfn=(n_1,\dots,n_k),\\
\de(n)=\dim\cH^{m+1}_n-1,
\end{eqnarray*}
where $n,n_j$ are positive integers. We define the mean value as
follows:
\begin{eqnarray}\label{defme}
M_{\bfn}
=\int_{\bbS^{\de(n_1)}\times\dots\times\bbS^{\de(n_k)}}\!\frh^{m-k}\left(N_{u_1}\cap\dots\cap
N_{u_k}\right)\,d\td\si_{\de(n_1)}(u_1)\dots
d\td\si_{\de(n_k)}(u_k),
\end{eqnarray}
where $\td\si_j$ denotes the invariant measure on $\bbS^j$ with
the total mass 1. Let $\la_n$ be the eigenvalue of $-\De$ in
$\cH^{m+1}_n$; recall that
\begin{eqnarray*}
\la_n=n(n+m-1).
\end{eqnarray*}
\begin{theorem}\label{meanh}
Let $1\leq k\leq m$. Then
\begin{eqnarray}\label{meann}
M_\bfn=\om_{m-k}m^{-\frac{k}{2}}\sqrt{\la_{n_1}\dots \la_{n_k}},
\end{eqnarray}
where $M_\bfn$ is defined by {\rm(\ref{defme})}.
\end{theorem}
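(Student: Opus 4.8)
The plan is to express $M_\bfn$ as the integral over $\bbS^m$ of a pointwise density produced by a Kac--Rice (coarea) formula, using the evaluation map to read off the relevant distributions. First I would reduce to Gaussian fields: since $N_{u_i}$ depends only on the line $\bbR u_i$, the integrand $\frh^{m-k}(N_{u_1}\cap\dots\cap N_{u_k})$ is invariant under rescaling each $u_i$, and the radial projection of the standard Gaussian measure on $\cH^{m+1}_{n_i}$ onto $\bbS^{\de(n_i)}$ is $\td\si_{\de(n_i)}$; hence $M_\bfn$ is unchanged if each $u_i$ is taken to be a standard Gaussian field instead of being uniform on the sphere. The whole setup is $\OO(m+1)$-equivariant, so the mean of the measure $\frh^{m-k}|_{N_{u_1}\cap\dots\cap N_{u_k}}$ is an $\OO(m+1)$-invariant measure on $\bbS^m$, i.e.\ a constant multiple of $\frh^m$. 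Thus $M_\bfn=\om_m\,\rho$ with $\rho$ the mean density at a fixed $x\in\bbS^m$, and everything is reduced to a local computation.

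Next I would identify the joint law of the values and gradients at $x$. With $\phi^{(i)}$ the reproducing kernel of $\cH^{m+1}_{n_i}$ one has $\phi^{(i)}(x,x)=\dim\cH^{m+1}_{n_i}=\de(n_i)+1$, so the evaluation map $e_i(x)=\phi^{(i)}_x/\sqrt{\de(n_i)+1}$ is the immersion into the unit sphere that, by the Introduction, is a local homothety with factor $c_i=\sqrt{\la_{n_i}/m}$. Hence for a standard Gaussian $u_i$ the value $u_i(x)=\sqrt{\de(n_i)+1}\,\scal{u_i}{e_i(x)}$ is centred Gaussian of variance $\de(n_i)+1$, while in an orthonormal frame of $T_x\bbS^m$ the gradient $\nabla u_i(x)$ consists of $m$ independent centred Gaussians of variance $(\de(n_i)+1)c_i^2$. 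Because $e_i(x)\perp de_i(x)(T_x\bbS^m)$, value and gradient are uncorrelated, hence independent; and the $u_i$ being independent, the pairs $(u_i(x),\nabla u_i(x))$ are jointly independent. (The same facts come directly from the kernel: $\sum_\al Y_\al(x)\dd_j Y_\al(x)=\tfrac12\dd_j\phi^{(i)}(x,x)=0$ and $\sum_\al|\nabla Y_\al(x)|^2=\la_{n_i}(\de(n_i)+1)$ for any orthonormal basis $Y_\al$.)

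I would then apply the Kac--Rice formula for the common zero set of $U=(u_1,\dots,u_k)\colon\bbS^m\to\bbR^k$,
\begin{eqnarray*}
M_\bfn=\int_{\bbS^m}\mathbf E\big[J_k(x)\mid U(x)=0\big]\,p_{U(x)}(0)\,d\frh^m(x),\qquad J_k=\sqrt{\det\big(\scal{\nabla u_i}{\nabla u_j}\big)_{i,j}},
\end{eqnarray*}
where $p_{U(x)}$ is the density of $U(x)$. By independence $p_{U(x)}(0)=\prod_i(2\pi(\de(n_i)+1))^{-1/2}$ and the conditioning is vacuous. Writing $\nabla u_i(x)=\si_i\ze_i$ with $\si_i=\sqrt{\de(n_i)+1}\,c_i$ and $\ze_i$ independent standard Gaussian vectors in $\bbR^m$, one gets $J_k=(\prod_i\si_i)\sqrt{\det Z}$ with $Z=(\scal{\ze_i}{\ze_j})$. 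The products $\si_i\,p_{u_i(x)}(0)=c_i/\sqrt{2\pi}$ collapse the $\de(n_i)$-dependence, leaving $\prod_i c_i=m^{-k/2}\sqrt{\la_{n_1}\dots\la_{n_k}}$. Finally $\mathbf E[\sqrt{\det Z}]$ is the expected $k$-volume of the parallelepiped spanned by $k$ Gaussian vectors in $\bbR^m$; by Gram--Schmidt it equals $\prod_{j=1}^k\mathbf E[\chi_{m-j+1}]=2^{k/2}\prod_{d=m-k+1}^m\Ga(\tfrac{d+1}2)/\Ga(\tfrac d2)$, a telescoping product equal to $2^{k/2}\Ga(\tfrac{m+1}2)/\Ga(\tfrac{m-k+1}2)$. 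Combining this with $\om_m=2\pi^{(m+1)/2}/\Ga(\tfrac{m+1}2)$ and the factor $\pi^{-k/2}$ collapses everything to $\om_{m-k}$, giving exactly (\ref{meann}).

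The main obstacle is the rigorous justification of the Kac--Rice identity in this form: that $0$ is almost surely a regular value of $U$, so that $N_{u_1}\cap\dots\cap N_{u_k}$ is a.s.\ a smooth $(m-k)$-submanifold, and that one may pass from the coarea formula (which integrates over all levels $t\in\bbR^k$) to the single level $t=0$ by a mollification/approximate-identity argument and interchange the expectation with the limit. The distributional bookkeeping is routine once the homothety factor $c_i=\sqrt{\la_{n_i}/m}$ is in hand; the only genuinely delicate point is this analytic step, together with the observation that the immersion (rather than embedding) nature of $e_i$ is harmless, since the density $\rho$ is computed locally and is unaffected by any global non-injectivity.
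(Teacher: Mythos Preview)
Your argument is correct, and the Gamma-function bookkeeping checks out: with $E[\sqrt{\det Z}]=2^{k/2}\Ga(\tfrac{m+1}{2})/\Ga(\tfrac{m-k+1}{2})$ one gets $\om_m(2\pi)^{-k/2}E[\sqrt{\det Z}]=\om_{m-k}$ exactly. The Kac--Rice step is the only place needing care; for polynomials a Sard-type argument plus dominated convergence suffices, and you have identified this correctly.

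The paper, however, takes a different and somewhat more elementary route. Instead of Kac--Rice, it works entirely with integral geometry: it proves a one-step lemma (Lemma~\ref{rectx}) stating that for any compact symmetric $(r+1)$-rectifiable $X\subset\bbS^m$,
\[
\int_{\bbS^{\de(n)}}\frh^r(N_u\cap X)\,d\td\si_{\de(n)}(u)=s_n\,\frac{\om_r}{\om_{r+1}}\,\frh^{r+1}(X),
\]
by pushing $X$ forward through $\iota_n$ into $\bbS^{\de(n)}$ and applying Federer's kinematic formula (Theorem~\ref{feder}) with $A=\bbS^{\de(n)-1}$. Iterating this over $u_k,u_{k-1},\dots,u_1$ gives $M_\bfn=\om_{m-k}\prod_j s_{n_j}$, and the paper then computes $s_n=\sqrt{\la_n/m}$ via the Laplacian on $\OO(m+1)$. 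Both approaches hinge on the same homothety constant $s_n$, but the paper's inductive kinematic argument sidesteps Gaussian analysis and the Kac--Rice regularity justification entirely, working directly at the level of Hausdorff measures; your approach, by contrast, handles all $k$ variables at once and makes the dependence on $m,k$ emerge from a single Wishart-type moment, which is arguably more transparent once the probabilistic machinery is granted.
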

If $k=m$, then we get the mean value of
$\card\left(N_{u_1}\cap\dots\cap N_{u_m}\right)$; since $\om_0=2$
and $\frh^0=\card$, it is equal to
\begin{eqnarray*}
2m^{-\frac{m}{2}}\sqrt{\la_{n_1}\dots \la_{n_m}}.
\end{eqnarray*}

There is a natural equivariant immersion
$\iota_n:\,\bbS^{m}\to\bbS^{\de(n)}\subset\cH^{m+1}_n$:
\begin{eqnarray}
\iota_n(a)=\frac{\phi_a}{|\phi_a|}.\label{iotap}
\end{eqnarray}
If $n$ is odd, then $\iota_n$ is one-to-one; for even $n>0$,
$\iota_n$ is a two-sheeted covering, which identifies opposite
points. Clearly, the Riemannian metric in $\iota(\bbS^m)$ is
$\OO(m+1)$-invariant and the stable subgroup of $a$ acts
transitively on spheres in $T_a\bbS^{m}$. Hence, the mapping
$\iota_n$ is locally a metric homothety. Let $s_n$ be its
coefficient. Clearly,
\begin{eqnarray}
s_n=\frac{|d_a\iota_n(v)|}{|v|},\label{defsn}
\end{eqnarray}
where the right-hand side is independent of $a\in\bbS^m$ and $v\in
T_a\bbS^{m}\setminus \{0\}$. For any $l$-rectifiable set
$X\subseteq\bbS^{m}$ such that $X\cap(-X)=\emptyset$, where $l\leq
m$, we have
\begin{eqnarray}\label{iodil}
&\frh^l(\iota_n(X))=s_n^{l}\frh^l(X).
\end{eqnarray}
\begin{lemma}\label{rectx}
Let $u\in\cH_n^{m+1}$ and $X\subseteq\bbS^{m}$ be compact,
symmetric, and $(r+1)$-rectifiable, where $r\leq m-1$. Then
\begin{eqnarray*}
\int_{\bbS^{\de(n)}}\frh^r(N_u\cap X)\,d\si_{\de(n)}(u)=
s_n\frac{\om_r}{\om_{r+1}}\frh^{r+1}(X).
\end{eqnarray*}
\end{lemma}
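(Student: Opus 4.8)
The plan is to reduce the average over the sphere $\bbS^{\de(n)}$ of harmonics to the kinematic formula of Theorem~\ref{feder}, using the homothety property of the immersion $\iota_n$. First I would observe that the averaging over $u\in\bbS^{\de(n)}$ in $\cH_n^{m+1}$, with the invariant measure $\si_{\de(n)}$, can be replaced by an averaging over the group $\OO(m+1)$ acting on $\bbS^m$. Indeed, since $\iota_n$ is an equivariant immersion by \eqref{iotap} and the metric on $\iota_n(\bbS^m)$ is $\OO(m+1)$-invariant, the action of $\OO(m+1)$ on $\bbS^{\de(n)}$ leaves invariant the subsphere which is (up to the covering) the image $\iota_n(\bbS^m)$; the point is to transport the question about the zero set $N_u$ of a random harmonic into a question about the intersection of $X$ with a randomly rotated copy of a fixed nodal set.

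The key step is the following identification. For $u=\iota_n(a)$, the nodal set $N_u=\{x\in\bbS^m:\scal{u}{\phi_x}=0\}$ equals, by \eqref{orthm}, the set of $x$ for which $\iota_n(x)\perp\iota_n(a)$ in $\cH_n^{m+1}$. Thus $N_u$ is precisely the $\iota_n$-preimage of the intersection of $\iota_n(\bbS^m)$ with the hyperplane orthogonal to the unit vector $u$; equivalently, $x\in N_u$ iff $\iota_n(x)$ lies on the ``equator'' of $\bbS^{\de(n)}$ determined by $u$. Averaging $\frh^r(N_u\cap X)$ over $u$ then becomes, after pushing $X$ forward by $\iota_n$, an average of the form $\int\frh^r\bigl(E_u\cap\iota_n(X)\bigr)\,d\si_{\de(n)}(u)$, where $E_u$ is a great subsphere (equator) of codimension $1$ in $\bbS^{\de(n)}$. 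This is exactly the setting of Theorem~\ref{feder} on the sphere $\bbS^{\de(n)}$ with $A=\iota_n(X)$ and $B$ a fixed equator $E$ (an $(\de(n)-1)$-rectifiable great sphere), since averaging over unit $u$ is the same as averaging over the rotation carrying a fixed equator to $E_u$.

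Applying \eqref{intca} on $\bbS^{\de(n)}$ with $k'=r+1$ (the dimension of $\iota_n(X)$, which is $(r+1)$-rectifiable because $\iota_n$ is a local homothety and $X$ is $(r+1)$-rectifiable), $l'=\de(n)-1$, and ambient dimension $d=\de(n)$, so that the intersection dimension is $r'=k'+l'-d=r$, gives
\begin{eqnarray*}
\int_{\bbS^{\de(n)}}\frh^r\bigl(\iota_n(X)\cap E_u\bigr)\,d\si_{\de(n)}(u)
=\frac{\om_r}{\om_{r+1}\,\om_{\de(n)-1}}\,\frh^{r+1}(\iota_n(X))\,\frh^{\de(n)-1}(E).
\end{eqnarray*}
Here $\frh^{\de(n)-1}(E)=\om_{\de(n)-1}$ since $E$ is a unit great sphere, so that factor cancels the $\om_{\de(n)-1}$ in the denominator. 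By \eqref{iodil}, $\frh^{r+1}(\iota_n(X))=s_n^{r+1}\frh^{r+1}(X)$ (the hypothesis $X\cap(-X)=\emptyset$ needed there is handled by the symmetry of $X$: one splits $X$ into two halves interchanged by the antipodal map, noting that $\iota_n$ identifies antipodes when $n$ is even, so $\frh^r(N_u\cap X)$ on the full symmetric $X$ corresponds to $\frh^r$ on a single sheet together with a compensating factor). The translation between the local homothety coefficient on $\iota_n(X)\subset\bbS^{\de(n)}$ and the $\frh^r$-measure of $N_u\cap X$ downstairs contributes one more power of $s_n$, reducing $s_n^{r+1}$ to the single factor $s_n$ in the claimed identity. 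I expect the main obstacle to be bookkeeping the two-sheeted covering for even $n$ and making the equator/hyperplane correspondence precise enough that the Hausdorff measures match under $\iota_n$ without extra constants; the homothety and equivariance should make every geometric step automatic, but the measure-normalization constants must be tracked carefully so that the net result collapses to exactly $s_n\,\om_r/\om_{r+1}\,\frh^{r+1}(X)$.
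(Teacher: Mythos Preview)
Your approach is essentially the paper's: push $X$ forward via $\iota_n$, identify $N_u$ with $\iota_n^{-1}(u^\bot)$, apply Theorem~\ref{feder} on $\bbS^{\de(n)}$ with $A=\bbS^{\de(n)-1}$ and $B=\iota_n(X)$, and use \eqref{iodil}. Two points need tightening. First, your opening sentence is wrong: the averaging over $u\in\bbS^{\de(n)}$ is replaced by averaging over $\OO(\de(n)+1)$, not $\OO(m+1)$; the latter group does not act transitively on $\bbS^{\de(n)}$, and the equator $E_u$ runs over all great $(\de(n)-1)$-spheres, most of which are not $\OO(m+1)$-translates of one another --- you silently correct this later when you write ``averaging over the rotation carrying a fixed equator to $E_u$''. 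Second, the power bookkeeping is misstated: the passage from $\frh^r(N_u\cap X)$ downstairs to $\frh^r(E_u\cap\iota_n(X))$ upstairs multiplies by $s_n^r$, so the left side equals $s_n^{-r}$ times the Federer integral; combined with $\frh^{r+1}(\iota_n(X))=s_n^{r+1}\frh^{r+1}(X)$ this gives $s_n^{-r}\cdot s_n^{r+1}=s_n$, not ``one more power of $s_n$''. For the symmetry issue, the paper simply observes that both sides are additive in $X$ and reduces to the case $X\cap(-X)=\emptyset$, which is cleaner than splitting into halves.
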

\begin{proof}
Since both sides are additive on $X$, we may assume
$X\cap(-X)=\emptyset$. We apply Theorem~\ref{feder} to the sphere
$\bbS^{\de(n)}$ and its subsets $A=\bbS^{\de(n)-1}$,
$B=\iota_n(X)$. In the notation of this theorem, $d=\de(n)$,
$k=d-1$, $l=r+1$; $K\om_k=\frac{\om_r}{\om_l}$. Replacing
integration over $\bbS^{d}$ by averaging over $\OO(d+1)$ and using
(\ref{iodil}), we get

\begin{eqnarray*}
\int_{\bbS^{d}}\frh^r(N_u\cap X)\,d\si_{d}(u)=
\frac1{s_n^r}\int_{\bbS^{d}}\frh^r(\iota(N_u\cap
X))\,d\si_{d}(u)\\ =
\frac1{s_n^r}\int_{\bbS^{d}}\frh^r(u^\bot\cap\iota(X))\,d\si_{d}(u)=
\frac1{s_n^r}\int_{\OO(d+1)}\frh^r(g\bbS^{k}\cap\iota(X))\,d\mu_{d}(g)\\=
\frac1{s_n^r} K\frh^{k}\left({\bbS^{k}}\right)\frh^{r+1}(\iota(X))
=\frac{\om_r}{s_n^r\om_{r+1}}\frh^{r+1}(\iota(X))=
s_n\frac{\om_r}{\om_{r+1}}\frh^{r+1}(X).\quad 
\end{eqnarray*}
\end{proof}
\begin{corollary}\label{nodon}
The mean value of $\frh^{m-1}(N_u)$ over $u\in\cH_n^{m+1}$ is
equal to $s_n\om_{m-1}$.
\end{corollary}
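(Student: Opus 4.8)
The plan is to obtain the corollary as the top-dimensional special case of Lemma~\ref{rectx}. First I would apply that lemma with $X=\bbS^{m}$ and $r=m-1$. This choice is legitimate: the sphere $\bbS^{m}$ is compact, it is symmetric because $\bbS^{m}=-\bbS^{m}$, and it is $m$-rectifiable (being a smooth compact manifold of dimension $m$), so it qualifies as an $(r+1)$-rectifiable set with $r+1=m$; the constraint $r\leq m-1$ then holds, with equality.

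With this substitution the intersection collapses, since $N_u\cap\bbS^{m}=N_u$. Hence the left-hand side of Lemma~\ref{rectx} becomes exactly the quantity $\int_{\bbS^{\de(n)}}\frh^{m-1}(N_u)\,d\si_{\de(n)}(u)$, which is by definition the mean value of $\frh^{m-1}(N_u)$ over $u\in\cH_n^{m+1}$ that the corollary asks us to compute.

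It then remains only to read off the right-hand side. By the definition $\om_k=\frh^k(\bbS^k)$ we have $\frh^{m}(\bbS^{m})=\om_{m}$, so Lemma~\ref{rectx} produces the value $s_n\frac{\om_{m-1}}{\om_{m}}\frh^{m}(\bbS^{m})=s_n\frac{\om_{m-1}}{\om_{m}}\om_{m}=s_n\om_{m-1}$, which is the asserted answer.

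I do not expect any real obstacle here: the entire content of the corollary is already packaged inside Lemma~\ref{rectx}, and the only point requiring a word is the verification that the ambient sphere $\bbS^{m}$ itself is an admissible test set $X$ for the lemma. That verification is immediate (compactness, the symmetry $\bbS^{m}=-\bbS^{m}$, and top-dimensional rectifiability), so the corollary follows as a one-line specialization of the lemma.
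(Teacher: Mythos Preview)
Your argument is correct and matches the paper's own proof exactly: the paper's proof of this corollary is literally the single line ``Set $X=\bbS^{m}$, $r=m-1$,'' which is precisely the specialization of Lemma~\ref{rectx} you carry out.
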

\begin{proof}
Set $X=\bbS^{m}$, $r=m-1$.
\end{proof}
\begin{corollary}\label{meana}
Let $M_\bfn$, $m$, and $k$  be as in {\rm(\ref{defme})}. Then
\begin{eqnarray}\label{measn}
M_\bfn=\om_{m-k}\prod_{j=1}^k s_{n_j}.
\end{eqnarray}
\end{corollary}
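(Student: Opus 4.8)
The plan is to integrate the harmonics $u_1,\dots,u_k$ one variable at a time by Fubini's theorem, applying Lemma~\ref{rectx} at each stage to strip off one factor $s_{n_j}$, and then to collapse the resulting product of $\om$'s by telescoping. The base of the iteration is Corollary~\ref{nodon}.

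First I would record the structural facts that make Lemma~\ref{rectx} applicable. Each nodal set $N_{u_j}$ is compact and, since $u_j$ is a real polynomial with $u_j(-x)=(-1)^{n_j}u_j(x)$, symmetric. A finite intersection of nodal sets is a real algebraic subset of $\bbS^{m}$, hence rectifiable; for $u_{p+1},\dots,u_k$ outside a set of measure zero the intersection $\bigcap_{j=p+1}^{k}N_{u_j}$ has the expected codimension $k-p$, so it is $(m-k+p)$-rectifiable, while on the exceptional null set its $(m-k+p)$-dimensional measure, and the corresponding integrand, vanish. This is exactly the hypothesis of Lemma~\ref{rectx}.

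For $1\le p\le k$ set
\[
I_p=\int_{\bbS^{\de(n_p)}\times\dots\times\bbS^{\de(n_k)}}
\frh^{m-k+p-1}\Bigl(\,\bigcap_{j=p}^{k}N_{u_j}\Bigr)
\,d\si_{\de(n_p)}(u_p)\dots d\si_{\de(n_k)}(u_k),
\]
so that $I_1=M_\bfn$ and, by Corollary~\ref{nodon}, $I_k=s_{n_k}\om_{m-1}$. Fixing $u_{p+1},\dots,u_k$ and writing $X=\bigcap_{j=p+1}^{k}N_{u_j}$, which is compact, symmetric and $(m-k+p)$-rectifiable, I would apply Lemma~\ref{rectx} with $r=m-k+p-1$ (so $r+1=m-k+p$) to the inner integral over $u_p$:
\[
\int_{\bbS^{\de(n_p)}}\frh^{m-k+p-1}(N_{u_p}\cap X)\,d\si_{\de(n_p)}(u_p)
=s_{n_p}\,\frac{\om_{m-k+p-1}}{\om_{m-k+p}}\,\frh^{m-k+p}(X).
\]
Integrating the remaining variables then yields the recursion $I_p=s_{n_p}\dfrac{\om_{m-k+p-1}}{\om_{m-k+p}}\,I_{p+1}$ for $1\le p\le k-1$.

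Telescoping from $p=1$ to $p=k$ gives
\[
M_\bfn=I_1=\Bigl(\prod_{p=1}^{k-1}s_{n_p}\frac{\om_{m-k+p-1}}{\om_{m-k+p}}\Bigr)I_k
=\Bigl(\prod_{p=1}^{k-1}s_{n_p}\Bigr)\frac{\om_{m-k}}{\om_{m-1}}\,s_{n_k}\om_{m-1}
=\om_{m-k}\prod_{j=1}^{k}s_{n_j},
\]
since the product of the $\om$-ratios collapses to $\om_{m-k}/\om_{m-1}$. The computation and the telescoping are routine; the step requiring care is the rectifiability bookkeeping, i.e.\ verifying that each intermediate intersection $X$ is a symmetric, compact, $(m-k+p)$-rectifiable set for almost every choice of the already-fixed harmonics so that Lemma~\ref{rectx} genuinely applies and the use of Fubini is legitimate. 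The exceptional configurations form a null set and contribute nothing, since there $\frh^{m-k+p}(X)=0$.
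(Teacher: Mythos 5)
Your argument is correct and is essentially the paper's own proof: both iterate Lemma~\ref{rectx} to peel off one harmonic at a time (the paper removes $u_k$ first and recurses on $\bfn'=(n_1,\dots,n_{k-1})$, you remove $u_p$ for increasing $p$), telescope the ratios $\om_{m-k+p-1}/\om_{m-k+p}$, and close with Corollary~\ref{nodon}. One small inaccuracy in your bookkeeping: on the exceptional null set of degenerate configurations the intersection can have dimension \emph{larger} than expected, so its $(m-k+p)$-dimensional Hausdorff measure is $+\infty$ rather than $0$; this still does not affect the integral, since such configurations form a set of measure zero.
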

\begin{proof}
Set $X=N_{u_1}\cap\dots\cap N_{u_{k-1}}$. By Lemma~\ref{rectx},
\begin{eqnarray*}
M_\bfn=s_{n_k}\frac{\om_{m-k}}{\om_{m-k+1}}M_{\bfn'},
\end{eqnarray*}
where $\bfn'=(n_1,\dots,n_{k-1})$. Applying this procedure
repeatedly and using Corollary~\ref{nodon} in the final step, we
get (\ref{measn}).
\end{proof}
It remains to find $s_n$. Set
\begin{eqnarray*}
d=\dim\OO(m+1).
\end{eqnarray*}
Since the stable subgroup $\OO(m)$ of the point $a=(0,\dots,0,1)$
acts transitively on spheres in $T_a\bbS^{m}$, the invariant
Riemannian metric in $\bbS^{m}$ can be lifted up to a bi-invariant
metric on $\OO(m+1)$ in such a way that the canonical projection
$\OO(m+1)\to\bbS^{m}$ is a metric submersion. Let
$\xi_1,\dots,\xi_{m},\dots,\xi_d$ be an orthonormal linear base in
the Lie algebra $\so(m+1)$. Realizing $\so(m+1)$ by the left
invariant
vector fields on $\OO(m+1)$, 
we get the invariant Laplace--Beltrami operator on $\OO(m+1)$:
\begin{eqnarray*}
\td\De=\xi_1^2+\dots+\xi_d^2.
\end{eqnarray*}
The sum is independent of the choice of the base since it is left
invariant and this property holds at the identity element $e$.
Thus, we may assume that
\begin{eqnarray}\label{insta}
\xi_{m+1},\dots,\xi_d\in\so(m).
\end{eqnarray}
For $f\in C^2(\bbS^{m})$, set $\td f(g)=f(ga)$. Then $\scal{\De
f}{\phi_a}=\td\De\td f(e)$. Since $\iota$ is equivariant, we have
\begin{eqnarray}\label{difio}
d_a\iota(\xi a)=\frac1{|\phi_a|}\xi\phi_a
\end{eqnarray}
for all $\xi\in\so(m+1)$. It follows from (\ref{insta}) that
$\xi_1 a,\dots,\xi_{m}a$ is a base for $T_a\bbS^{m}$ and
$\xi_1\phi_a,\dots\xi_{m}\phi_a$ is a base for
$T_{\phi_a}\iota(\bbS^{m})$. Moreover,
\begin{eqnarray*}
&|\xi_k a|=1,\quad k=1,\dots,m,\\
&\xi_k a=0,\quad k=m+1,\dots,d,
\end{eqnarray*}
where the first equality holds since the projection
$\OO(m+1)\to\bbS^{m}$ is a metric submersion.  Due to these
equalities,  (\ref{iotap}), (\ref{defsn}), and (\ref{difio}), we
get
\begin{eqnarray*}
ms_n^2=s_n^2\sum_{k=1}^{d}|\xi_k
a|^2=\sum_{k=1}^{d}|d_a\iota(\xi_ka)|^2=
\frac1{|\phi_a|^2}\sum_{k=1}^{d}|\xi_k\phi_a|^2\\
=-\frac1{|\phi_a|^2}\sum_{k=1}^{d}\scal{\xi_k^2\phi_a}{\phi_a}
=-\frac1{|\phi_a|^2}\scal{\De\phi_a}{\phi_a} =\la_n.
\end{eqnarray*}
\begin{proof}[Proof of Theorem~\ref{meanh}]
Due to the calculation above,
\begin{eqnarray*}
s_n=\sqrt{\frac{\la_n}{m}}.
\end{eqnarray*}
Thus, Corollary~\ref{meana} implies (\ref{meann}).
\end{proof}
In the case $n_1=\dots=n_k=n$, there is another natural
explanation of the equalities (\ref{meann}), (\ref{measn}):
\begin{eqnarray*}
M_\bfn=\om_{m-k}\left(\frac{\la_n}{m}\right)^{\frac{k}2}=\om_{m-k}s_n^k.
\end{eqnarray*}
The mean value can be defined as the average over the action of
the group $\OO(m+1)$ on the set of subspaces of codimension $k$ in
$\cH_n^{m+1}$, which can be realized as
$\cN_{u_1}\cap\dots\cap\cN_{u_k}=u_1^\bot\cap\dots\cap u_k^\bot$:
\begin{eqnarray*}
M_\bfn&=
\int_{\OO(m+1)}\frh^{m-k}(\iota_n^{-1}(g\bbS^{\de(n)-k}\cap\iota_n(\bbS^m)))\,d\mu_m(g)\\
&=s_n^{k-m}
\int_{\OO(m+1)}\frh^{m-k}(g\bbS^{\de(n)-k}\cap\iota_n(\bbS^m))\,d\mu_m(g)\\
&\phantom{xxxxxxxxxxxxxxxxxxxx}
=s_n^{k-m}\frac{\om_{m-k}}{\om_m}\frh^{m}(\iota(\bbS^m))&=
\om_{m-k}s_n^k.
\end{eqnarray*}
The method of calculation of the mean Hausdorff measure easily can
be extended to families of invariant (may be, reducible) finite
dimensional function spaces on a homogeneous space whose isotropy
group acts transitively on spheres in the tangent space.

\medskip

{\bf Acknowledgements.} I am grateful to D.\,Jakobson for useful
references and comments and to L.\,Polterovich for his
question/conjecture on ``the Bezout theorem in the mean''.

\vskip1cm
V.M. Gichev\\
Omsk Branch of Sobolev Institute of Mathematics\\
Pevtsova, 13, Omsk, 644099, Russia\\
\medskip gichev@ofim.oscsbras.ru
\end{document}